\newtheorem{thm}{Theorem}
\newtheorem{lem}[thm]{Lemma}
\newtheorem{df}{Definition}
\newtheorem{obs}{Observation}
\newcommand{\bdf}{\begin{df} \begin{rm}}
\newcommand{\edf}{\end{rm} \end{df}}
\newenvironment{proof}{{\bf Proof.}}{\hspace*{\fill} \rule{2mm}{2mm} \par \hspace{0.1mm}}
\title{Bondage number of grid graphs}
\author{Magda Dettlaff$^{\dagger}$, Magdalena Lema\'{n}ska$^{\dagger}$ and
 Ismael G. Yero$^{\ddagger}$
\\
\\
$^{\dagger}${\small Department of Technical Physics and Applied Mathematics} \\ {\small Gda\'nsk University of Technology, ul. Narutowicza 11/12 80-233 Gda\'nsk, Poland }\\ {\small mdettlaff\@@mif.pg.gda.pl}, {\small magda\@@mifgate.mif.pg.gda.pl}
\\
$^{\ddagger}${\small Departamento de Matem\'aticas, Escuela Polit\'ecnica Superior de Algeciras}\\
{\small Universidad de C\'adiz,} {\small
Av. Ram\'on Puyol, s/n, 11202 Algeciras, Spain.} \\ {\small
ismael.gonzalez\@@uca.es}}
\date{}
\begin{document}

\maketitle

\begin{abstract} The bondage number $b(G)$ of a nonempty graph $G$ is the cardinality of a smallest set
of edges whose removal from $G$ results in a graph with domination number greater than the domination number of $G$. Here we study the bondage number of some grid-like graphs. In this sense, we obtain some bounds or exact values of the bondage number of some strong product and direct product of two paths.
\end{abstract}

{\it Keywords:} Domination; bondage number; strong product graphs; direct product graphs.

{\it AMS Subject Classification Numbers:}  05C12; 05C76.

{\section{Introduction}} Let $G=(V,E)$ be a connected undirected
graph with vertex set $V$ and edge set $E$. Given two vertices $u,v\in V$, the notation $u\sim v$ means that $u$ and $v$ are adjacent. The \emph{neighborhood}
of a vertex $v\in V$ in $G$ is the set $N_{G}(v)=\{u\in V\;:\;u\sim v\}$. For a set $X\subseteq V,$ \emph{the open
neighborhood} $N_{G}(X)$ is defined to be $\bigcup_{v\in
X}N_{G}(v)$ and \emph{the closed neighborhood}
$N_{G}[X]=N_{G}(X)\cup X.$

The \emph{degree} $d_{G}(v)$ of a vertex $v$ is the number of edges
incident to $v$, $d_{G}(v)=|N_{G}(v)|.$ The minimum and maximum
degrees among all vertices of $G$ are denoted by $\delta (G)$ and
$\Delta (G)$,  respectively. The {\it distance} $d_{G}(u,v)=d(u,v)$
between two vertices $u$ and $v$ in a connected graph $G$ is the
length of a shortest $(u-v)$ path in $G.$

A set $D\subseteq V$ is a {\it dominating set} of $G$ if
$N_{G}[D]=V$. The {\it domination number} of $G$, denoted
$\gamma(G)$, is the minimum cardinality of a dominating set in $G$. Any dominating set of cardinality $\gamma(G)$ is called a $\gamma$-set. For unexplained terms and symbols see \cite{ks}.

The \emph{bondage number} $b(G)$ of a nonempty graph $G$ with $E\not
= \emptyset$ is the minimum  cardinality among all sets of edges
$E'\subseteq E$ for which $\gamma (G-E')>\gamma (G)$. The domination
number of every spanning subgraph of a nonempty graph $G$ is at
least as great as $\gamma (G)$, hence the bondage number of a
nonempty graph is well defined. Bondage number was introduced by
Fink et al. \cite{fi} in $1990$. However, the early research on the
bondage number can be found in Bauer et al. \cite{bh}. In
\cite{bh,fi} was shown that every tree has bondage number equal to $1$ or
$2$. Hartnell and Rall \cite{hr1} proved that for the cartesian
product $G_n=K_n\square K_n$, $n>1$, we have
$b(G_n)=\frac{3}{2}\Delta$. Teschner \cite{te, te3, te2} also
studied the bondage number; for instance, in \cite{te3} he showed that $b(G)\leq
\frac{3}{2}\Delta (G)$ holds for any graph $G$ satisfying
$\gamma(G)\leq 3$. Moreover, the bondage number of planar graphs was
described in \cite{de, frv, ky}. Carlson and Develin \cite{de} showed
that the corona $G=H\circ K_1$ satisfies $b(G)=\delta (H)+1$. In
\cite{ksk} Kang et al. proved for discrete torus $C_n\square C_4$
that $b(C_n\square C_4)=4$ for any $n\geq 4$. Also, some relationships between the connectivity and the bondage number of graphs were studied in \cite{ls}. In \cite{mesh}, the exact values of bondage number of Cartesian product of two paths $P_n$ and $P_m$ have been determined for $m\leq 4$. For more results on bondage number of a graph we suggest the survey \cite{survey}.

The following two lemmas show general bounds for the bondage number of a~graph.

\begin{lem}{\em \cite{hr1}}
If $u$ and $v$ are a pair of adjacent vertices of a graph $G$, then
\[b(G)\leq d(u)+d(v)-1-|N(u)\cap N(v)|.\]
\label{lem2}
\end{lem}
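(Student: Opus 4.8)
The plan is to exhibit a specific set of edges whose deletion forces the domination number to increase, and then count those edges. Let $D$ be a $\gamma$-set of $G$, and consider the adjacent pair $u\sim v$. The guiding idea is that in $G$ the two vertices $u$ and $v$ can be dominated economically because they cover a large common neighborhood; by stripping away enough incident edges I want to isolate $u$ so that any dominating set is compelled to spend an extra vertex on $u$ itself, thereby exceeding $\gamma(G)$.

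First I would delete every edge incident to $u$ except the edge $uv$; this removes $d(u)-1$ edges. In the resulting graph $G'$, the only vertex adjacent to $u$ is $v$, so $u$ is dominated in $G'$ only by $u$ or $v$. Next I would delete the remaining edges incident to $v$ that are not also incident to $u$, but taking care not to double-count the edges running to the common neighborhood $N(u)\cap N(v)$, which have already been removed in the first step. The number of edges incident to $v$ other than $uv$ is $d(v)-1$, and of these exactly $|N(u)\cap N(v)|$ lead to common neighbors and were already deleted; so this second batch contributes $d(v)-1-|N(u)\cap N(v)|$ new deletions. After both steps the edge $uv$ is the sole edge incident to either $u$ or $v$, making $\{u,v\}$ a component isomorphic to $K_2$ (an isolated edge). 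The total number of deleted edges is
\[
(d(u)-1)+\bigl(d(v)-1-|N(u)\cap N(v)|\bigr)=d(u)+d(v)-2-|N(u)\cap N(v)|,
\]
which is one less than the claimed bound, so I will need to account for the discrepancy below.

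The key step is to argue that $\gamma(G'')>\gamma(G)$ where $G''$ is the graph after the deletions, or to delete one further edge to guarantee the strict increase. Here the component $\{u,v\}$ forming an isolated $K_2$ forces any dominating set of $G''$ to include at least one of $u,v$. The subtle point — and the main obstacle — is that merely isolating the edge $uv$ need not raise the domination number, since a $\gamma$-set of $G$ might already place a vertex in $\{u,v\}$ with no other penalty. To force a genuine increase I expect to delete one more edge, namely $uv$ itself, after the two batches above: deleting all $d(u)+d(v)-1-|N(u)\cap N(v)|$ edges incident to $u$ or $v$ leaves $u$ completely isolated, so $u$ must belong to every dominating set, while the rest of the graph $G-\{u\}$ still requires $\gamma(G)$ vertices in the worst case. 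The heart of the argument is verifying that the remainder cannot be dominated with fewer than $\gamma(G)$ vertices once $u$ is forced in, so that the total is at least $\gamma(G)+1$.

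I anticipate the cleanest route is to avoid isolating a vertex and instead argue directly: after deleting the $d(u)+d(v)-1-|N(u)\cap N(v)|$ edges that make $uv$ an isolated $K_2$ and remove all other edges touching $u$ or $v$, suppose for contradiction that $\gamma(G'')=\gamma(G)$ via some set $D''$. One may assume $v\in D''$ (replacing $u$ by $v$ if necessary, since $v$ dominates $u$). Then $D''$ restricted to $V\setminus\{u,v\}$ would have to dominate $G-\{u,v\}$, and combined with the structure this yields a dominating set of $G$ no larger than $\gamma(G)$ that reveals a contradiction through a counting or exchange argument. The main difficulty I foresee is making this exchange argument airtight, ensuring that the vertices freed up by the isolation genuinely cannot be repurposed to dominate $G$ with the same budget; pinning down exactly which configurations force the strict inequality is where the real work lies.
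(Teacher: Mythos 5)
The paper itself gives no proof of this lemma (it is quoted from Hartnell and Rall \cite{hr1}), so your attempt has to be judged against the standard argument, and it contains a genuine gap in two places. First, a bookkeeping error: after you delete the $d(u)-1$ edges incident to $u$ other than $uv$, the edges joining $v$ to the common neighbours in $N(u)\cap N(v)$ have \emph{not} ``already been removed'' --- an edge $vw$ with $w\in N(u)\cap N(v)$ is not incident to $u$, so step one never touches it. Consequently your two batches do not isolate the pair $\{u,v\}$ as a $K_2$, and the whole description of the intermediate graph is wrong. Second, and more importantly, the step you yourself flag as ``where the real work lies'' is exactly the missing idea, and your fallback plan cannot work as stated: merely isolating $u$ and hoping that ``the remainder still requires $\gamma(G)$ vertices'' is false in general (in $C_4$, isolating a vertex costs two edges and leaves the domination number at $2$, whereas the lemma's bound is $3$ and is tight there).

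The correct choice of $E'$ is: all $d(u)$ edges incident to $u$, together with the $d(v)-1-|N(u)\cap N(v)|$ edges incident to $v$ whose other end is neither $u$ nor a common neighbour --- the edges from $v$ into $N(u)\cap N(v)$ are deliberately \emph{kept}. In $G-E'$ the vertex $u$ is isolated, so any dominating set $D'$ contains $u$; and the only surviving neighbours of $v$ are common neighbours of $u$ and $v$, so $D'$ must also contain some $w\in\{v\}\cup\bigl(N(u)\cap N(v)\bigr)$ in order to dominate $v$. Every such $w$ lies in $N_G(u)$, hence $D'\setminus\{u\}$ dominates $G$ (all vertices other than $u$ were already dominated in the subgraph $G-E'$, and $u$ is dominated by $w$ in $G$). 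If $|D'|=\gamma(G)$ this gives $\gamma(G)\le\gamma(G)-1$, a contradiction, so $\gamma(G-E')>\gamma(G)$ and $b(G)\le|E'|=d(u)+d(v)-1-|N(u)\cap N(v)|$. The exchange argument you were searching for is precisely this: the $-|N(u)\cap N(v)|$ saving comes from retaining $v$'s edges to the common neighbourhood so that whoever dominates $v$ in the pruned graph automatically dominates $u$ in $G$.
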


\begin{lem}{\em (\cite{bh}, \cite{hr2})}
If $u$ and $v$ are two vertices of a graph $G$ such that $d(u,v)\leq 2$, then \[b(G)\leq d(u)+d(v)-1.\]
\label{lem1}
\end{lem}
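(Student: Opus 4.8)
The plan is to exhibit, in each case, an explicit edge set $E'$ with $|E'|\le d(u)+d(v)-1$ for which $\gamma(G-E')>\gamma(G)$; since $b(G)$ is the minimum size of such a set, this yields the stated bound. Because $u$ and $v$ are distinct vertices of a connected graph, $d(u,v)\in\{1,2\}$, and I would treat the two values separately.

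If $d(u,v)=1$, then $u$ and $v$ are adjacent, so Lemma~\ref{lem2} applies directly and gives $b(G)\le d(u)+d(v)-1-|N(u)\cap N(v)|\le d(u)+d(v)-1$; nothing further is needed here.

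The substantial case is $d(u,v)=2$. Now $u$ and $v$ are non-adjacent but share a common neighbour $w\in N(u)\cap N(v)$. I would let $E'$ consist of every edge incident with $u$ together with every edge incident with $v$ except $vw$; since $u\not\sim v$ there is no double counting, so $|E'|=d(u)+(d(v)-1)=d(u)+d(v)-1$. In $H=G-E'$ the vertex $u$ is isolated and $v$ has $w$ as its only neighbour. Suppose for contradiction that $\gamma(H)=\gamma(G)$, and let $D'$ be a $\gamma$-set of $H$. Since $u$ is isolated, $u\in D'$. I would then split on whether $w\in D'$. If $w\in D'$, then $D'\setminus\{u\}$ dominates $V\setminus\{u\}$ in $H$ (the only vertex $u$ dominated in $H$ was itself) and hence in $G$, while $w$ dominates $u$ in $G$; thus $D'\setminus\{u\}$ dominates $G$ and has size $\gamma(G)-1$, a contradiction. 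If $w\notin D'$, then the pendant condition at $v$ forces $v\in D'$, and I would verify that $(D'\setminus\{u,v\})\cup\{w\}$ dominates $G$: in $H$ the vertices whose sole dominator could be $u$ or $v$ lie in $\{u\}\cup\{v,w\}$, so $D'\setminus\{u,v\}$ dominates $V\setminus\{u,v,w\}$ in $G$, and $w$, being adjacent in $G$ to both $u$ and $v$, covers the remaining three vertices. This set again has size $\gamma(G)-1$, a contradiction. In either subcase $\gamma(H)>\gamma(G)$, which finishes the case.

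The main obstacle is the bookkeeping in the $d(u,v)=2$ case: one must argue precisely which vertices can lose their dominator when $u$ and $v$ are deleted from $D'$, using that in $H$ the closed neighbourhoods of $u$ and $v$ are $\{u\}$ and $\{v,w\}$ respectively, and then confirm that reinserting the common neighbour $w$ recovers exactly those vertices while the rest of $D'$ handles everything else. The delicate points are keeping each domination claim attached to the correct graph ($H$ versus $G$) and checking that the constructed set is genuinely smaller; the edge count and the adjacent case are routine.
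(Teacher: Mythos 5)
Your proof is correct: the edge count $|E'|=d(u)+d(v)-1$ is valid because $u\not\sim v$ in the distance-two case, and both subcases ($w\in D'$ and $w\notin D'$) correctly produce a dominating set of $G$ of size $\gamma(G)-1$, yielding the required contradiction. The paper does not prove this lemma --- it is quoted from the cited references --- and your argument is essentially the classical one from those sources (isolate $u$, reduce $v$ to a pendant at a common neighbour $w$, then swap $w$ in for $u$ and $v$), with the adjacent case correctly delegated to Lemma~\ref{lem2}.
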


\section{Bondage number of $P_n\boxtimes P_m$}

Let $G$ and $H$ be two graphs with the sets of vertices $V_1=\{v_1,v_2,\ldots ,v_n\}$ and $V_2=\{u_1,u_2,\ldots ,u_m\}$, respectively. The strong product of $G$ and $H$ is the graph $G\boxtimes H$ formed
by the vertices $V=\{(v_i,u_j)\;:\;1\le i\le n,\,1\le j\le m\}$ and
two vertices $(v_i,u_j)$ and $(v_k,u_l)$ are adjacent in $G\boxtimes
H$ if and only if ($v_i=v_k$ and $u_j\sim u_l$), ($v_i\sim v_k$ and
$u_j=u_l$) or ($v_i\sim v_k$ and $u_j\sim u_l$). In this section we will study the bondage number of the strong product of two paths $P_n$ and $P_m$ of order at least two. We begin by giving some observations and lemmas which will be useful into obtaining the bondage number of $P_n\boxtimes P_m$ for $n,m\ge 2$.

We will say that a graph $G$ without isolated vertices satisfies the
property $\mathcal{P}$ if it has a dominating set of minimum
cardinality $S=\{u_1,u_2,...,u_k\}$, $k=\gamma(G)$, such that
$N[u_i]\cap N[u_j]=\emptyset$ for every $i,j\in \{1,...,k\}$, $i\ne
j$. Now, let $\mathfrak{F}$ be the class of all graphs satisfying
property $\mathcal{P}$. Notice that for instance every path graph belongs to $\mathfrak{F}$.

\begin{obs}\label{obs-paths}
Let $\{v_1,v_2,...,v_n\}$ be the set of vertices of a path $P_n$ of
order $n$. Then \begin{itemize}
\item[\rm{(i)}] If $n=3t$, then there is only one dominating set $S$ of minimum cardinality
in $P_n$; it satisfies property $\mathcal{P}$ and it is
$S=\{v_2,v_5,...,v_{n-1}\}$.
\item[\rm{(ii)}] If $n=3t+1$, then there is only one dominating set $S$ of minimum cardinality
in $P_n$ satisfying property $\mathcal{P}$ and it is
$S=\{v_1,v_4,v_7,...,v_{n-3},v_{n}\}$.
\item[\rm{(iii)}] If $n=3t+2$, then there are only two dominating sets $S$ and $S'$ of minimum
cardinality in $P_n$ satisfying property $\mathcal{P}$ and they are
$S=\{v_2,v_5,...,v_{n-3},v_{n}\}$ and
$S'=\{v_1,v_4,v_7,...,v_{n-1}\}$.
\end{itemize}
\end{obs}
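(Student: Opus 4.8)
The plan is to exploit the fact that property $\mathcal{P}$ forces the closed neighborhoods of the chosen vertices to \emph{partition} $V(P_n)$, and then to read off from this partition exactly how many endpoints and how many interior vertices a minimum $\mathcal{P}$-set can contain. First I would record the elementary structural fact that in $P_n$ with $n\ge 2$ every vertex has degree $1$ (the two endpoints $v_1,v_n$) or degree $2$ (the interior vertices), so $|N[u]|\in\{2,3\}$, with $|N[u]|=3$ precisely when $u$ is interior. If $S$ is a minimum dominating set satisfying $\mathcal{P}$, then $\{N[u]:u\in S\}$ is a family of pairwise disjoint sets covering $V$, hence a partition of $V$, which gives $\sum_{u\in S}|N[u]|=n$. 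Writing $a$ for the number of endpoints in $S$ and $b$ for the number of interior vertices in $S$, and using the standard value $\gamma(P_n)=\lceil n/3\rceil$, I obtain the linear system $a+b=\gamma(P_n)$ and $2a+3b=n$, subject to the constraint $0\le a\le 2$ (there are only two endpoints).

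Solving the system yields $a=3\gamma(P_n)-n$ and $b=n-2\gamma(P_n)$, and the residue of $n$ modulo $3$ now pins down $a$ completely. When $n=3t$ one gets $a=0$, so $S$ consists of $t$ interior vertices whose size-$3$ closed neighborhoods tile $P_n$; when $n=3t+1$ one gets $a=2$, forcing $v_1,v_n\in S$; and when $n=3t+2$ one gets $a=1$, so exactly one of $v_1,v_n$ lies in $S$. This is the step where the counting does all the work: the arithmetic leaves no freedom in the number of endpoints used, and in the case $n=3t+2$ it leaves precisely the two symmetric choices (left endpoint or right endpoint), which is the source of the two sets $S$ and $S'$ in part (iii).

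To finish, I would argue that each admissible choice of endpoint set determines $S$ uniquely. Once the endpoint contributions are fixed, the remaining interior centers must tile a contiguous sub-path $\{v_p,v_{p+1},\ldots,v_q\}$ of length divisible by $3$ by disjoint closed neighborhoods of size $3$, and such a tiling is forced: the leftmost vertex $v_p$ must be dominated, and since no center may lie to its left, the unique possibility is the center $v_{p+1}$, after which the same argument applies to $\{v_{p+3},\ldots,v_q\}$ by induction. Reading off the centers $v_{p+1},v_{p+4},\ldots,v_{q-1}$ in each case then produces exactly the sets stated. For part (i) I would additionally observe that when $n=3t$ the relation $\sum|N[u]|=n$ already follows for \emph{any} minimum dominating set, since $t$ closed neighborhoods of size at most $3$ can cover $3t$ vertices only if all have size $3$ and are disjoint; hence every $\gamma$-set of $P_{3t}$ automatically satisfies $\mathcal{P}$ and coincides with the unique tiling, which is the stronger uniqueness asserted there. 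The only genuinely nontrivial point is this uniqueness-of-tiling step, but on a path it reduces to the single observation above about the leftmost undominated vertex.
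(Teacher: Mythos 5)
Your argument is correct, but there is nothing in the paper to compare it against: the authors state this as an \texttt{Observation} and offer no proof at all, implicitly treating the structure of minimum dominating sets of a path as folklore. Your proof is a clean, self-contained justification. The key move --- noting that property $\mathcal{P}$ forces $\{N[u]:u\in S\}$ to partition $V(P_n)$, so that $2a+3b=n$ and $a+b=\lceil n/3\rceil$ pin down the number $a$ of endpoint centers to $0$, $2$, or $1$ according to $n\bmod 3$ --- is exactly the right invariant, and the leftmost-undominated-vertex induction then forces the tiling uniquely in each case. You also correctly noticed the asymmetry in the statement: part (i) asserts uniqueness among \emph{all} $\gamma$-sets of $P_{3t}$, not just those satisfying $\mathcal{P}$, and your remark that $t$ closed neighborhoods of size at most $3$ can only cover $3t$ vertices if they all have size $3$ and are disjoint is precisely what upgrades the claim there. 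One small point worth making explicit in the tiling step: when you argue that the leftmost uncovered vertex $v_p$ must be dominated by $v_{p+1}$, you should also rule out $v_p$ itself being a center, which follows because $N[v_p]$ would then contain $v_{p-1}$, a vertex already lying in an earlier tile, contradicting disjointness. With that sentence added the proof is complete.
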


The following result from \cite{yero-dom} is useful
into studying the bondage number of $P_n\boxtimes P_m$.

\begin{lem}{\em \cite{yero-dom}}\label{dom-number-strong-2}
For any $n,m\ge 2$,
$$\gamma(P_{n}\boxtimes P_{m})=\gamma(P_n)\gamma(P_m)=\left\lceil\frac{n}{3}\right\rceil\left\lceil\frac{m}{3}\right\rceil.$$
\end{lem}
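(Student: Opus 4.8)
The plan is to prove the two inequalities $\gamma(P_n\boxtimes P_m)\le \lceil n/3\rceil\lceil m/3\rceil$ and $\gamma(P_n\boxtimes P_m)\ge \lceil n/3\rceil\lceil m/3\rceil$ separately, using as the single structural ingredient the fact that in a strong product one has $N[(v_i,u_j)]=N[v_i]\times N[u_j]$. Translated to paths, this says that the closed neighborhood of $(v_i,u_j)$ is exactly the boundary-clipped $3\times 3$ block of vertices whose coordinates differ from $i$ and $j$ by at most one; equivalently, two vertices $(v_i,u_j)$ and $(v_k,u_l)$ have intersecting closed neighborhoods if and only if $\max\{|i-k|,|j-l|\}\le 2$. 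I would first record this identity, and also recall the well-known value $\gamma(P_n)=\lceil n/3\rceil$, which is moreover realized by the $\gamma$-sets listed in Observation~\ref{obs-paths}.

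For the upper bound, I would take a $\gamma$-set $S_1$ of $P_n$ and a $\gamma$-set $S_2$ of $P_m$ and show that the product $S_1\times S_2$ dominates $P_n\boxtimes P_m$. Using $N[(v_i,u_j)]=N[v_i]\times N[u_j]$, for an arbitrary vertex $(v_a,u_b)$ one picks $v_i\in S_1$ with $v_a\in N[v_i]$ and $u_j\in S_2$ with $u_b\in N[u_j]$; then $(v_a,u_b)\in N[v_i]\times N[u_j]=N[(v_i,u_j)]$. Hence $S_1\times S_2$ is a dominating set of cardinality $\gamma(P_n)\gamma(P_m)=\lceil n/3\rceil\lceil m/3\rceil$, which gives the desired upper bound.

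For the lower bound I would use a packing argument: if a set $P$ of vertices has pairwise disjoint closed neighborhoods, then every dominating set must meet each $N[p]$ in a distinct vertex, so $\gamma\ge|P|$. Using the distance characterization above, I would choose $R\subseteq\{1,\dots,n\}$ of size $\lceil n/3\rceil$ whose elements are pairwise at distance at least $3$ (for example $1,4,7,\dots$), and similarly $C\subseteq\{1,\dots,m\}$ of size $\lceil m/3\rceil$. Then for any two distinct vertices $(v_i,u_j)$ and $(v_k,u_l)$ with $i,k\in R$ and $j,l\in C$ one has $|i-k|\ge 3$ or $|j-l|\ge 3$, so $\max\{|i-k|,|j-l|\}\ge 3$ and their closed neighborhoods are disjoint. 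Thus $\{(v_i,u_j):i\in R,\,j\in C\}$ is a packing of size $\lceil n/3\rceil\lceil m/3\rceil$, forcing $\gamma(P_n\boxtimes P_m)\ge\lceil n/3\rceil\lceil m/3\rceil$.

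The two bounds together yield the claimed equality. The routine part is the upper bound; the step that carries the real content is the lower bound, and within it the only point needing care is verifying that the product packing $R\times C$ really has pairwise disjoint closed neighborhoods, that is, that distinct grid points whose row or column indices already differ by at least $3$ are at strong-product distance at least $3$. This is exactly where the block description of $N[\cdot]$ is used, and it is what prevents a single dominating vertex from ``serving'' two of the chosen packing points at once.
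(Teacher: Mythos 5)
Your proposal is correct, but note that the paper itself offers no proof of this lemma: it is imported verbatim from the cited reference \cite{yero-dom}, so there is no in-paper argument to compare against. Your two halves are both sound. The upper bound is the standard observation that $N[(v_i,u_j)]=N[v_i]\times N[u_j]$ in a strong product, so the product of two dominating sets dominates; this in fact proves $\gamma(G\boxtimes H)\le\gamma(G)\gamma(H)$ for arbitrary graphs. The lower bound is where the path structure enters: you exhibit a $2$-packing $R\times C$ of size $\lceil n/3\rceil\lceil m/3\rceil$, using that $N[(v_i,u_j)]\cap N[(v_k,u_l)]=(N[v_i]\cap N[v_k])\times(N[u_j]\cap N[u_l])$ is empty as soon as $|i-k|\ge 3$ or $|j-l|\ge 3$, and that for paths the $2$-packing number coincides with the domination number. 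This is the clean way to see the equality (it amounts to $\gamma(P_n\boxtimes P_m)\ge\rho(P_n\boxtimes P_m)\ge\rho(P_n)\rho(P_m)=\gamma(P_n)\gamma(P_m)$), and having it spelled out makes the paper more self-contained than relying on the external citation.
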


\begin{thm}\label{bond-strong-cotas}
For any $n,m\ge 2$,
$$1\le b(P_n\boxtimes P_m)\le 5.$$
\end{thm}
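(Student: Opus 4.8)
The lower bound $1\le b(P_n\boxtimes P_m)$ is immediate, since any edge set witnessing the bondage number must be nonempty. The content of the statement is therefore the upper bound, and the plan is to derive it from Lemma \ref{lem2} by choosing a well-placed pair of adjacent vertices.

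Write the vertices of $P_n\boxtimes P_m$ as pairs $(i,j)$ with $1\le i\le n$ and $1\le j\le m$, so that $(i,j)$ and $(k,l)$ are adjacent exactly when $|i-k|\le 1$, $|j-l|\le 1$ and $(i,j)\ne(k,l)$; this is the ``king move'' adjacency. Since the strong product is commutative, I may assume $m\ge n$. Consider first the case $m\ge 3$. I would take $u=(1,1)$, a corner vertex of degree $3$ with $N(u)=\{(1,2),(2,1),(2,2)\}$, together with its neighbour $v=(1,2)$, which for $m\ge 3$ is a boundary vertex of degree $5$ with $N(v)=\{(1,1),(1,3),(2,1),(2,2),(2,3)\}$. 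A direct inspection gives $N(u)\cap N(v)=\{(2,1),(2,2)\}$, so $|N(u)\cap N(v)|=2$, and Lemma \ref{lem2} yields
\[
b(P_n\boxtimes P_m)\le d(u)+d(v)-1-|N(u)\cap N(v)| = 3+5-1-2 = 5.
\]

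The only configuration not covered by $m\ge 3$ (after the reduction $m\ge n$) is $n=m=2$, where $P_2\boxtimes P_2\cong K_4$; here $\gamma=1$, and deleting the two edges $\{(1,1),(1,2)\}$ and $\{(2,1),(2,2)\}$ leaves a $4$-cycle, whose domination number is $2>1$, so $b\le 2\le 5$. Assembling the two cases completes the proof.

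The argument is short, so I do not anticipate a serious obstacle; the only point demanding care is the bookkeeping of the degrees and of the intersection $N(u)\cap N(v)$ near the boundary, together with verifying that the chosen pair $(1,1),(1,2)$ genuinely has the claimed degrees for every admissible $n$ (in particular when $n=2$, so that $(1,2)$ still sees $(2,1),(2,2),(2,3)$). Treating the degenerate case $n=m=2$ by hand is precisely what makes the case split necessary.
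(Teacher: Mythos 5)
Your proposal is correct and follows essentially the same route as the paper: apply Lemma \ref{lem2} to a corner vertex of degree $3$ and an adjacent vertex of degree at most $5$ sharing exactly two common neighbours, giving $b\le 3+5-1-2=5$. The separate treatment of $n=m=2$ is harmless but unnecessary, since in $K_4\cong P_2\boxtimes P_2$ the same pair $u=(1,1)$, $v=(1,2)$ still satisfies $d(u)=3$, $d(v)=3\le 5$ and $|N(u)\cap N(v)|=2$, so Lemma \ref{lem2} already gives $b\le 3$ there.
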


\begin{proof}
Since $n,m\ge 2$, we have that there are always two adjacent vertices $u,v$ in $P_n\boxtimes P_m$ such that $d(u)=3$, $d(v)\le 5$ and
$|N(u)\cap N(v)|=2$. So, the result follows by Lemma \ref{lem2}.
\end{proof}

Similarly to the case of Cartesian product, hereafter we will study the bondage number of $P_n\boxtimes P_m$ by making some cases.

\begin{thm}
If $(n=3t$ and $m=3r)$ or $(n=3t$ and $m=3r+2)$, then
$$b(P_{n}\boxtimes P_m)=1.$$
\end{thm}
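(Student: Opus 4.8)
The plan is to produce a single edge $e$ whose deletion raises the domination number; as $b(G)\ge 1$ for every graph with at least one edge, this gives $b(P_{3t}\boxtimes P_m)=1$. The guiding remark is that any dominating set of $G-e$ is also a dominating set of $G$, because deleting an edge only shrinks closed neighbourhoods; hence if $\gamma(G-e)=\gamma(G)$ then some $\gamma$-set of $G$ would still dominate $G-e$. Writing a vertex of $G=P_{3t}\boxtimes P_m$ as $(v_i,u_j)$ (think of $i$ as a row and $j$ as a column, so that $(v_i,u_j)$ dominates exactly those $(v_{i'},u_{j'})$ with $|i-i'|\le 1$ and $|j-j'|\le 1$), it therefore suffices to choose $e=\{x,y\}$ so that for \emph{every} $\gamma$-set $D$ of $G$ the deletion of $e$ leaves one of $x,y$ undominated. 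To arrange this I must first describe all $\gamma$-sets.

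The central step is a rigidity statement: in $P_{3t}\boxtimes P_m$ every minimum dominating set uses only vertices in the rows $v_2,v_5,\dots,v_{3t-1}$ (indices $\equiv 2\pmod 3$), with exactly $\lceil m/3\rceil$ of them in each such row, and the columns used in any one of these rows form a minimum dominating set of the path $P_m$. I would prove this by counting on the rows of index $\equiv 1$ and $\equiv 0\pmod 3$. A single dominator covers three consecutive rows, hence meets at most one row of index $\equiv 1\pmod 3$; covering the $m$ cells of such a row needs at least $\lceil m/3\rceil$ dominators, so the $t$ rows of index $\equiv 1\pmod 3$ already require $t\lceil m/3\rceil=\gamma(G)$ dominators in pairwise disjoint families (using Lemma~\ref{dom-number-strong-2}). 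Equality forces these families to exhaust $D$ and to be exactly of size $\lceil m/3\rceil$; the identical count on the rows of index $\equiv 0\pmod 3$, compared with the first and then propagated band by band, pushes every dominator into a row of index $\equiv 2\pmod 3$ and shows that within each band of three consecutive rows the columns used must dominate $P_m$. A welcome consequence is that the three-row bands behave independently, so the whole problem reduces to the single band $P_3\boxtimes P_m$ and to the list of minimum dominating sets of $P_m$.

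I would then take $e=\{(v_2,u_{m-1}),(v_2,u_m)\}$, the middle-row edge joining the last two columns. Using Observation~\ref{obs-paths} together with the elementary span/gap description of minimum dominating sets of a path, one checks the following. If $m=3r$, the unique minimum dominating set of $P_m$ uses the columns $\{2,5,\dots,m-1\}$, which contains $m-1$ but not $m$. If $m=3r+2$, a minimum dominating set of $P_m$ either uses the columns $S_m=\{2,5,\dots,m\}$ (which contains $m$ but neither $m-1$ nor $m-2$) or else contains column $m-1$ but not $m$. In the first possibility $(v_2,u_m)$ is dominated only by $(v_2,u_{m-1})$, whereas in the second $(v_2,u_{m-1})$ is dominated only by $(v_2,u_m)$ (here the fact that $m-2\notin S_m$ is exactly what rules out a second dominator of $(v_2,u_{m-1})$). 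In every case the deletion of $e$ strips the unique dominator from one endpoint, and by the independence of the bands no dominator from another band can compensate; thus no $\gamma$-set of $G$ dominates $G-e$, whence $\gamma(G-e)>\gamma(G)$ and $b(P_{3t}\boxtimes P_m)=1$.

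The hard part is the rigidity statement, and within it the band-by-band propagation of the conclusion that no dominator lies off the rows of index $\equiv 2\pmod 3$; once the $\gamma$-sets are pinned down this way, the choice of $e$ and the final case check are immediate.
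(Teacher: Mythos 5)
Your proof is correct and follows essentially the same route as the paper: pin down the structure of all minimum dominating sets of $P_{3t}\boxtimes P_m$ (they live in the rows of index $\equiv 2\pmod 3$ and restrict, band by band, to minimum dominating sets of $P_m$) and then exhibit one edge whose deletion leaves an endpoint undominated no matter which such set is chosen; the paper does the same but with the first two columns instead of the last two. The one substantive difference is that you actually prove the rigidity statement --- via the disjoint-families count over the rows $\equiv 1$ and $\equiv 0\pmod 3$ and the band-by-band propagation --- whereas the paper merely asserts it, so your version is the more complete argument. One slip to fix: in the case $m=3r+2$ your two sub-conclusions are transposed; when the band uses columns $\{2,5,\dots,m\}$ it is $(v_2,u_{m-1})$ whose only dominator is $(v_2,u_m)$ (this is where $m-2\notin S_m$ is needed), and when the band contains column $m-1$ but not $m$ it is $(v_2,u_m)$ whose only dominator is $(v_2,u_{m-1})$ --- either way the conclusion you draw from it is the right one.
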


\begin{proof}
Notice that if $n=3t$ and $m=3r$, then by Observation
\ref{obs-paths} (i) there exists only one dominating set of minimum
cardinality in $P_n$ and only one dominating set of minimum
cardinality in $P_m$ and they satisfy the property $\mathcal{P}$.
Thus, there exists only one dominating set $S$, of minimum
cardinality in $P_{n}\boxtimes P_m$; and it also satisfies the
property $\mathcal{P}$. So, every vertex outside of $S$ is dominated
by only one vertex from $S$. Therefore, by deleting any edge $e$ of
$P_{n}\boxtimes P_m$ between a~vertex of $S$ and other vertex
outside of $S$, we obtain that the domination number of
$P_{n}\boxtimes P_m-\{e\}$ is greater than the domination number of
$P_{n}\boxtimes P_m$.

On the other hand, let $V_1=\{u_1,u_2,...,u_n\}$ and
$V_2=\{v_1,v_2,...,v_m\}$ be the set of vertices of $P_n$ and $P_m$,
respectively. Since $n=3t$, by Observation \ref{obs-paths}
(i), we have that there is only one dominating set of minimum
cardinality in $P_n$ and it is $S_1=\{u_2,u_5,...,u_{n-1}\}$.
Moreover, since $m=3r+2$ we have that every
dominating set $S_2$ of minimum cardinality in $P_m$ satisfies either
\begin{itemize}
\item $v_1\in S_2$ and $v_2,v_3\notin S_2$,
\item or $v_2\in S_2$ and $v_1,v_3\notin S_2$.
\end{itemize}
So, every dominating set $S$ of minimum cardinality in $P_{n}\boxtimes
P_m$ contains either the vertex $(u_2,v_1)$ (in which case, $(u_2,v_2)$ is only dominated by $(u_2,v_1)$) or the vertex $(u_2,v_2)$ (in which case, $(u_2,v_1)$ is only dominated by $(u_2,v_2)$) and also $S$ does not contain the vertex $(u_2,v_3)$, neither any vertex of type $(u_1,v_j)$ or $(u_3,v_l)$, with $j,l\in \{1,...,m\}$.
Thus, if we delete the edge $e'=(u_2,v_1)(u_2,v_2)$ we obtain that
any dominating set of minimum cardinality in $P_{n}\boxtimes P_m$ is not a dominating set in $P_{n}\boxtimes P_m-\{e'\}$. Therefore,
$\gamma(P_{n}\boxtimes P_{m}-\{e'\})>\gamma(P_{n}\boxtimes P_{m})$.
\end{proof}

\begin{thm}
If $n=3t$ and $m=3r+1$, then $$b(P_{n}\boxtimes P_m)=2.$$
\end{thm}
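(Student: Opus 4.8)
The plan is to reduce everything to a rigid description of the $\gamma$-sets of $G=P_n\boxtimes P_m$ when $n=3t$. Write $V_1=\{u_1,\dots,u_n\}$ and $V_2=\{v_1,\dots,v_m\}$ for the vertices of $P_n$ and $P_m$. Recall that $N[(u_i,v_j)]=N[u_i]\times N[v_j]$, so each chosen vertex dominates the $3\times 3$ block of rows $\{i-1,i,i+1\}$ and columns $\{j-1,j,j+1\}$ (truncated at the border). I would first prove a \emph{Structure Lemma}: every $\gamma$-set of $G$ consists, for each $k\in\{1,\dots,t\}$, of exactly $r+1$ vertices lying in the middle row $u_{3k-1}$ of the $k$-th band of rows $\{3k-2,3k-1,3k\}$, whose columns form a minimum dominating set of $P_m$. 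The count per band is immediate from Lemma~\ref{dom-number-strong-2}: the middle row $u_{3k-1}$ of a band can only be dominated from inside that band, so the columns used there must dominate $P_m$, forcing at least $r+1$ vertices per band; since $\gamma(G)=t(r+1)$ there are exactly $r+1$ in each band and their columns form a minimum dominating set of $P_m$.

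The harder half of the Structure Lemma, and the main obstacle, is showing that these vertices sit in the \emph{middle} row $u_{3k-1}$ rather than anywhere in the band. For the top band, row $u_1$ can only be dominated from rows $u_1,u_2$; since the band's columns already form a \emph{minimal} dominating set of $P_m$, none of them may be omitted, so no vertex of the band lies in row $u_3$. A symmetric argument pins the bottom band to rows $u_{3t-1},u_{3t}$. I would then propagate these constraints inward: a vertex placed in a non-middle row loses its reach toward the far side of its band, and because the neighbouring band is simultaneously pulled toward its own boundary, the bottom (resp.\ top) row of the band cannot be covered. Making this propagation precise by induction from both ends, using only that each column set $C_k$ is a minimal dominating set of $P_m$, is the technical heart of the proof.

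Granting the Structure Lemma, the upper bound $b(G)\le 2$ is short: no $\gamma$-set contains any vertex of row $u_1$. Delete the two edges $e_1=(u_1,v_1)(u_2,v_1)$ and $e_2=(u_1,v_1)(u_2,v_2)$. Since $N[(u_1,v_1)]=\{u_1,u_2\}\times\{v_1,v_2\}$, in $G-\{e_1,e_2\}$ the only remaining neighbour of the corner $(u_1,v_1)$ is $(u_1,v_2)$, which also lies in row $u_1$. As deleting edges cannot decrease the domination number, any dominating set of $G-\{e_1,e_2\}$ of size $\gamma(G)$ is also a $\gamma$-set of $G$, hence avoids row $u_1$ entirely, and therefore cannot dominate $(u_1,v_1)$. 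Thus $\gamma(G-\{e_1,e_2\})>\gamma(G)$ and $b(G)\le 2$.

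For the lower bound $b(G)\ge 2$ I would show that deleting any single edge $e$ leaves $\gamma$ unchanged, by producing a $\gamma$-set surviving in $G-e$. By the Structure Lemma all dominators lie in the rows $u_2,u_5,\dots,u_{n-1}$, which are pairwise non-adjacent, so an edge matters only if at least one endpoint lies in such a row. If neither endpoint is a potential dominator the edge is irrelevant. If exactly one endpoint lies in a middle row, the abundance of minimum dominating sets of $P_m$ lets me choose a $\gamma$-set in which that endpoint is unused, or in which the other endpoint keeps a second dominator. The only delicate case is a horizontal edge $(u_{3k-1},v_j)(u_{3k-1},v_{j+1})$ inside a middle row; here I would exploit that, because $m=3r+1$, $P_m$ has minimum dominating sets carrying a doubled pair such as $\{v_1,v_3,v_6,\dots,v_{3r}\}$, so the endpoint not in $C_k$ retains an alternative dominator after $e$ is removed. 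Checking that a suitable minimum dominating set of $P_m$ exists for every column $j$, including the boundary columns $1,2$ and $m-1,m$, is the routine but somewhat lengthy part; it is exactly the flexibility absent when $m\equiv 0,2\pmod 3$, which is why the value jumps from $1$ to $2$ in this case.
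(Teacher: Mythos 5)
Your proposal follows essentially the same route as the paper: both rest on the structural fact that every $\gamma$-set of $P_{3t}\boxtimes P_m$ lies entirely in the middle rows $u_2,u_5,\dots,u_{3t-1}$ with columns forming minimum dominating sets of $P_m$, both get $b\le 2$ by deleting the two edges joining the corner $(u_1,v_1)$ to row $u_2$, and both get $b\ge 2$ from the flexibility coming from $\gamma(P_{3r+1}-e)=\gamma(P_{3r+1})$ for every edge $e$. The step you flag as the technical heart --- pinning each band's dominators to its middle row --- is asserted in the paper with even less justification than your propagation sketch, so your attempt is faithful to (and, if anything, more candid than) the published argument.
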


\begin{proof}
Let $V_1=\{u_1,u_2,...,u_n\}$ and $V_2=\{v_1,v_2,...,v_m\}$ be the
set of vertices of $P_n$ and $P_m$, respectively. If $n=3t$, then by
Observation \ref{obs-paths} (i) we have that there is only one
dominating set $S_1$ of minimum cardinality in $P_n$, it satisfies
property $\mathcal{P}$ and it is $S_1=\{u_2,u_5,...,u_{n-1}\}$.
Also, every dominating set of minimum cardinality in $P_m$ contains either the
vertex $v_1$ or the vertex $v_2$.

Thus, in $P_n\boxtimes P_m$, we
have that for every dominating set $S$ of minimum cardinality it is satisfied either $(u_2,v_1)\in S$ or $(u_2,v_2)\in S$. Notice that no vertex of type $(u_1,v_j)$ or $(u_3,v_l)$ is contained in $S$, with $j,l\in \{1,...,m\}$.  Now, since the vertex $(u_1,v_1)$ is only dominated by the vertices $(u_2,v_1)$ or $(u_2,v_2)$ by deleting the edges
$(u_2,v_1)(u_1,v_1)$ and $(u_2,v_2)(u_1,v_1)$ we have that
$$\gamma(P_n\boxtimes
P_m-\{(u_2,v_1)(u_1,v_1),(u_2,v_2)(u_1,v_1)\})>\gamma(P_n\boxtimes
P_m).$$ Thus, $b(P_{n}\boxtimes P_m)\le 2$.

On the other hand, since $n=3t$ we have that every vertex belonging
to any dominating set $S$ of minimum cardinality in $P_{n}\boxtimes P_m$
has the form $(u_i,v_j)$ where $u_i\in S_1$ and $S_1$ is the only dominating set of minimum cardinality in $P_n$. Hence, $S$ is
formed by $t$ subsets $A_{l}$, $l\in \{2,5,...,n-4,n-1\}$, such that
$A_l$ is a dominating set of minimum cardinality in the suitable copy of $P_m$ in $P_n\boxtimes P_m$; and $A_l$ dominates all the vertices of $\{u_{l-1},u_l,u_{l+1}\}\times P_m$ in the graph $P_n\boxtimes P_m$. Notice that the vertices of $\{u_{l-1},u_l,u_{l+1}\}\times P_m$ are only dominated by such a set $A_l$ and also, every dominating set of minimum cardinality in $P_m$ dominates the vertices of $\{u_{l-1},u_l,u_{l+1}\}\times P_m$.

Since $m=3r+1$ we have that $\gamma(P_m)=\gamma(P_{m-1})+1$. So, if we delete any edge $e$ of $P_m$ and $B$ is a dominating set of minimum cardinality in $P_m$, then we can obtain another dominating set $B'$ of minimum cardinality in $P_m-\{e\}$ such that $|B'|=|B|$.

Now, let $(u_i,v_j)\in S$. Thus, $(u_i,v_j)\in
A_{l}$ for some $l\in \{2,5,...,n-4,n-1\}$, $A_l$ is a dominating set of minimum cardinality in the suitable copy of $P_m$ in $P_n\boxtimes P_m$; and $A_l$ dominates all the vertices of $\{u_{l-1},u_l,u_{l+1}\}\times P_m$ in the graph $P_n\boxtimes P_m$. So, if we delete any edge incident
to $(u_i,v_j)$, then there exists
another set $A'_{l}$ such that it is a dominating set of minimum
cardinality in $P_m$ and $|A_l|=|A'_l|$. As a consequence, $A'_l$ dominates all the vertices of $\{u_{l-1},u_l,u_{l+1}\}\times P_m$ and the set $S'=S-A_l+A'_{l}$ is also a dominating set of minimum cardinality in $P_{n}\boxtimes P_m$ with
$|S|=|S'|$. Therefore, $b(P_{n}\boxtimes P_m)\ge 2$ and the result
follows.
\end{proof}

The following simply observation will be useful into proving the next Theorem.

\begin{obs}
Let us denote by $\{u_1,u_2,\ldots ,u_{3t+1}\}$ and $\{v_1,v_2,\ldots ,v_{3r+2}\}$ the sets of vertices of the paths $P_n=P_{3t+1}$ and $P_m=P_{3r+2}$, respectively. For every vertex $u_i$ $(1\leq i\leq 3t+1)$ there is a $\gamma$-set $D_n$ in $P_n$ which
contains $u_i$ and for every vertex $v_j$ $(1\leq j \leq 3r+2)$, where $j\not
\equiv 0 \pmod 3$, there is a $\gamma$-set $D_m$ in $P_m$ such that $v_j\in D_m$. Moreover, one of each two consecutive vertices $v_i$, $v_{i+1}$, where $i\equiv 1\pmod 3$, belongs to $D_m$.
\label{obs1}
\end{obs}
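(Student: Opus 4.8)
The plan is to treat the three assertions separately, relying throughout on the standard fact that $\gamma(P_k)=\lceil k/3\rceil$ (which also follows from Observation \ref{obs-paths}), together with a single uniform construction: to force a prescribed vertex into a minimum dominating set, place a dominator there and dominate the two resulting sub-paths optimally.

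For the first assertion, fix $u_i$ in $P_n=P_{3t+1}$ and put $u_i$ into the set. This dominates $u_{i-1},u_i,u_{i+1}$ and leaves the sub-paths $\{u_1,\dots,u_{i-2}\}$ and $\{u_{i+2},\dots,u_{3t+1}\}$ to be dominated independently. Taking $\gamma$-sets of these two sub-paths and adjoining $u_i$ produces a dominating set of $P_n$ of size $1+\lceil (i-2)/3\rceil+\lceil (3t-i)/3\rceil$. A short case analysis on $i \bmod 3$ (with the obvious reading of the empty sub-path when $i\in\{1,2\}$ or $i$ is near $3t+1$) shows this equals $t+1=\gamma(P_n)$ for every $i$, so the constructed set is a $\gamma$-set containing $u_i$. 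The identical construction applied to $v_j$ in $P_m=P_{3r+2}$ gives a dominating set of size $1+\lceil (j-2)/3\rceil+\lceil (3r+1-j)/3\rceil$; here the arithmetic yields $r+1=\gamma(P_m)$ exactly when $j\not\equiv 0\pmod 3$ (the residue $j\equiv 0$ produces $r+2$), which is precisely the stated restriction.

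For the last assertion I will show that every $\gamma$-set $D_m$ of $P_m=P_{3r+2}$ meets each pair $\{v_i,v_{i+1}\}$ with $i\equiv 1\pmod 3$; there are exactly $r+1$ such pairs, matching $|D_m|=r+1$. The two extreme pairs are forced: $v_1$ can be dominated only from $\{v_1,v_2\}$ and $v_{3r+2}$ only from $\{v_{3r+1},v_{3r+2}\}$. Suppose, for contradiction, that some intermediate pair $\{v_{3k+1},v_{3k+2}\}$ (with $1\le k\le r-1$) is disjoint from $D_m$. Then $v_{3k+1}$ and $v_{3k+2}$ must be dominated from outside the pair, which forces $v_{3k}\in D_m$ and $v_{3k+3}\in D_m$. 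These two vertices reach only $v_{3k-1}$ on the left and $v_{3k+4}$ on the right, so the sub-path $\{v_1,\dots,v_{3k-2}\}$ still requires at least $\lceil (3k-2)/3\rceil=k$ vertices of $D_m$ and $\{v_{3k+5},\dots,v_{3r+2}\}$ requires at least $\lceil (3(r-k)-2)/3\rceil=r-k$ more. Counting these together with $v_{3k}$ and $v_{3k+3}$ yields the disjoint estimate $|D_m|\ge k+(r-k)+2=r+2$, a contradiction.

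The first two assertions are routine once the splitting idea is in place; the main obstacle is the last one, where the delicate point is to verify that omitting an interior pair propagates a genuine deficiency to \emph{both} sides at once. The counting argument above is what closes this gap, and it explains why the pair structure, rather than individual vertices, is the right object to track.
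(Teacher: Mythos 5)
Your proof is correct. Note that the paper itself offers no proof of this observation --- it is stated as self-evident and used directly in Theorem \ref{p1} --- so there is no argument of the authors' to compare against; your write-up simply supplies the missing verification. The splitting construction for the first two assertions is the natural one, and the arithmetic checks out: for $P_{3t+1}$ the quantity $1+\lceil (i-2)/3\rceil+\lceil (3t-i)/3\rceil$ equals $t+1$ in all three residue classes of $i$, while for $P_{3r+2}$ the analogous count equals $r+1$ exactly when $j\not\equiv 0\pmod 3$, matching the stated restriction. For the ``moreover'' clause you actually prove something slightly stronger than what is claimed --- namely that \emph{every} $\gamma$-set of $P_{3r+2}$ meets each pair $\{v_{3k+1},v_{3k+2}\}$, not merely that the constructed $D_m$ can be arranged to do so --- and the contradiction argument is sound: omitting an interior pair forces $v_{3k},v_{3k+3}$ into $D_m$, and the two residual sub-paths $\{v_1,\ldots,v_{3k-2}\}$ and $\{v_{3k+5},\ldots,v_{3r+2}\}$ can only be dominated from within $\{v_1,\ldots,v_{3k-1}\}$ and $\{v_{3k+4},\ldots,v_{3r+2}\}$ respectively, so the three contributions $k$, $r-k$ and $2$ are indeed counted on disjoint vertex sets, giving $|D_m|\ge r+2$. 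The stronger formulation is arguably the cleaner way to organize the claim, since it is exactly what gets invoked in the case analysis of Theorem \ref{p1}.
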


\begin{thm}
If $n=3t+1$ and $m=3r+2$, then $$b(P_{n}\boxtimes
P_m)=3.$$
\label{p1}
\end{thm}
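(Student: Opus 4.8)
The plan is to prove the statement $b(P_{3t+1}\boxtimes P_{3r+2})=3$ in two stages: an upper bound $b\le 3$ exhibited by deleting three explicit edges, and a lower bound $b\ge 3$ showing that deleting any one or two edges cannot raise the domination number. Throughout, I will write $V_1=\{u_1,\dots,u_{3t+1}\}$ and $V_2=\{v_1,\dots,v_{3r+2}\}$ for the vertex sets of the two paths, and I will lean heavily on Observation~\ref{obs1}, which describes the flexibility of $\gamma$-sets in each factor. The governing intuition is that $\gamma(P_n\boxtimes P_m)=\lceil n/3\rceil\lceil m/3\rceil$ by Lemma~\ref{dom-number-strong-2}, and a minimum dominating set decomposes as a ``product'' of near-optimal dominating sets of the two path factors, so to force the domination number up one must destroy \emph{every} such product configuration at once.

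\textbf{Upper bound.} First I would locate a vertex whose domination is maximally constrained and sever all the edges that could dominate it from the outside. A natural candidate is a corner-type vertex such as $(u_1,v_j)$ for a suitable $j$, or a boundary vertex lying over the $m=3r+2$ direction where, by Observation~\ref{obs-paths}(iii), only two $\gamma$-configurations of $P_m$ exist. The plan is to pick a specific vertex $w$ for which exactly three edges join $w$ to vertices that could belong to some $\gamma$-set, and then argue that deleting those three edges leaves $w$ dominable only by itself, forcing every $\gamma$-set of the subgraph to include an extra vertex. I expect the verification to come down to enumerating, via Observation~\ref{obs1}, the few possible intersections of a $\gamma$-set with the local neighborhood of $w$ and checking that each enumerated case costs at least one extra vertex once the three edges are gone. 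This shows $\gamma(P_n\boxtimes P_m-E')>\gamma(P_n\boxtimes P_m)$ for the chosen three-edge set $E'$, hence $b\le 3$.

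\textbf{Lower bound.} The harder direction, and the main obstacle, is showing $b\ge 3$: that removing any single edge, or any two edges, still admits a minimum dominating set. The key leverage is the flexibility asserted by Observation~\ref{obs1}: because $n\equiv 1\pmod 3$ and $m\equiv 2\pmod 3$, every vertex $u_i$ lies in some $\gamma$-set of $P_n$, and every admissible $v_j$ (with $j\not\equiv 0\pmod 3$) lies in some $\gamma$-set of $P_m$, with the ``one of two consecutive'' guarantee giving extra room. The plan is to take an arbitrary set $E'=\{e_1,e_2\}$ of at most two edges of $P_n\boxtimes P_m$ and construct a minimum dominating set of $P_n\boxtimes P_m-E'$ by choosing the path-factor $\gamma$-sets so that neither deleted edge is ever the unique edge dominating a vertex. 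Concretely, each deleted edge has two endpoints; I would argue that the freedom to shift between the available $\gamma$-configurations in each factor lets one reroute domination around both $e_1$ and $e_2$ simultaneously while keeping the total size at $\gamma(P_n\boxtimes P_m)$.

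\textbf{Where the difficulty concentrates.} The delicate point in the lower bound is that two deleted edges may interact, for instance sharing an endpoint or lying in the restricted boundary region over the $m=3r+2$ direction where Observation~\ref{obs-paths}(iii) permits only two configurations; there the rerouting freedom is smallest. I anticipate the proof will split into cases according to how $e_1$ and $e_2$ sit relative to each other and relative to the boundary, and that the genuinely tight case is when both edges are packed into a single $3\times3$-type block near a corner. The goal in each case is to produce an explicit substitute $\gamma$-set, using the interchange $S'=S-A_l+A_l'$ device already seen in the preceding theorem to swap one copy's contribution for an equally small one that avoids the severed edges. Establishing that such a substitute always exists for every two-edge deletion is exactly what yields $b\ge 3$, and combined with the upper bound gives $b(P_n\boxtimes P_m)=3$.
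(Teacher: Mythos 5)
Your strategy coincides with the paper's on both fronts: for the upper bound, isolate a constrained corner vertex by deleting three edges; for the lower bound, show that any two deleted edges can be routed around by exploiting the freedom in choosing $\gamma$-sets of the path factors (Observation~\ref{obs1}) together with the product structure of $\gamma(P_n\boxtimes P_m)$ from Lemma~\ref{dom-number-strong-2}, using the interchange device $S'=S-A_l+A'_l$. However, as written the proposal is a plan rather than a proof, and the part you defer is precisely the part that carries all the content.

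For the upper bound, the clean version of your idea is to take a vertex $(u,v)$ of degree exactly three (a true corner; a general boundary vertex $(u_1,v_j)$ has degree five) and delete all three incident edges, so that $(u,v)$ becomes isolated and every dominating set of the resulting graph must contain it; the count then closes because $\gamma(P_{3t+1}\boxtimes P_{3r+1})=(t+1)(r+1)=\gamma(P_{3t+1}\boxtimes P_{3r+2})$, i.e.\ deleting the corner's column in the $m$-direction does not lower the domination number precisely because $m\equiv 2\pmod 3$. Your formulation (``exactly three edges join $w$ to vertices that could belong to some $\gamma$-set'') would, for a non-corner vertex, leave $w$ non-isolated and require an extra argument that no minimum dominating set of the modified graph can use the surviving neighbours; you should commit to the corner and to this arithmetic. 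For the lower bound, the genuine gap is that you never verify that a substitute $\gamma$-set exists for an \emph{arbitrary} pair of deleted edges: the paper first reduces to edges having an endpoint whose $P_m$-coordinate has index $\not\equiv 0\pmod 3$, and then runs four cases and roughly a dozen subcases according to how the two edges sit relative to each other and to the boundary; in several subcases the dominating set produced is not a pure product $D_n\times D_m$ but a column-by-column mixture of different $\gamma$-sets of $P_n$, and the tight configurations (two non-adjacent edges packed into the same column or adjacent columns) require bespoke choices. Asserting that the available flexibility ``lets one reroute domination around both $e_1$ and $e_2$ simultaneously'' is exactly the claim to be proved, not a proof of it; until that case analysis is carried out the bound $b(P_n\boxtimes P_m)\ge 3$, and hence the theorem, is not established.
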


\begin{proof} Let $(u,v)$ be a vertex of degree three in $P_n\boxtimes P_m$ and let $e_1,\ e_2,\ e_3$ denote edges
incident with $(u,v)$. We remove edges $e_1,\ e_2,\ e_3$ from $P_n\boxtimes P_m$. Hence, every dominating
set of minimum cardinality in $P_n\boxtimes P_m-\{e_1,e_2,e_3\}$ contains the vertex $(u,v)$.
Thus,
\begin{align*}
\gamma (P_n\boxtimes P_m-\{e_1,e_2,e_3\})&\geq \gamma (P_n\boxtimes (P_m-\{v\}))+1\\&=\gamma (P_{3t+1}\boxtimes P_{3r+1})+1\\&=(t+1)(r+1)+1.
\end{align*}

On the other hand, by Lemma \ref{dom-number-strong-2} we have that $\gamma (P_n\boxtimes P_m)=(t+1)(r+1)$. So, we obtain that $b(P_n\boxtimes P_m)\leq 3$.

On the other side, we show that removing any two edges does not change the domination number.
Let us denote by $\{u_1,u_2,\ldots ,u_{3t+1}\}$ and $\{v_1,v_2,\ldots ,v_{3r+2}\}$ the sets of vertices of the paths $P_n=P_{3t+1}$ and $P_m=P_{3r+2}$, respectively and let $H_k=P_n\boxtimes \{v_k\}$, where $1\leq k\leq m$ ($H_k\approx P_n$). We denote
$C=\{(u_i,v_j),\ 1\leq i\leq 3t+1,\ 1\leq j \leq 3r+2,\ j\not\equiv 0(mod 3)\}$. From Observation \ref{obs1},
for every vertex in $C$ there is a $\gamma$-set in $P_n\boxtimes P_m$ containing this vertex.
Now, we remove two edges $e_1$ and $e_2$. Obviously it is enough to consider the cases that
$e_1=ab$ and $e_2=xy$ have at least one end-vertex in $C$  (without loss of generality, let $a\in C$ and $x\in C$).
 Let us denote by $D_m$, $D_n$ and $D$ $\gamma$-sets in
$P_m$, $P_n$ and $P_n\boxtimes P_m-\{e_1,e_2\}$, respectively. We use the notation $a=(u^a,v^a)$,
$b=(u^b,v^b)$, $x=(u^x,v^x)$, $y=(u^y,v^y)$. The set $D'_n=\{u_1,u_4,\ldots u_{n-3},u_{n}\}$
is one of $\gamma$-sets of $P_n$. Let us denote $v^a=v_k$ for $1\leq k\leq m$ and $v^x=v_l$ for $1\leq l\leq m$. Without loss of generality, we can assume that $d(v_1,v^a)\leq d(v_1,v^x)$ (it means $k\leq l$).
In the following cases we show that $\gamma (P_n\boxtimes P_m-\{e_1,e_2\})=|D|=\gamma (P_n\boxtimes P_m)$
which implies that $b(P_n\boxtimes P_m)\geq 3$.

\emph{Case 1.} If $b,y\in V-C$, then $m\geq 5$. We have the following subcases.

\emph{Subcase 1.1.} If $l\equiv 2\;(mod\;3)$, then we denote a common neighbor of $a$ and $b$ in $C$
by $c=(u^c,v^c) \in C$, where $v^c=v^a=v_k$. We can construct $D_m$ such that $v_k$ and $v_{l+2}$ belong to $D_m$. We choose $D_n$ satisfying $u^c\in D_n$. Thus $D=D_n\times D_m$.

\emph{Subcase 1.2.} If $l \equiv 1\;(mod\; 3)$, then we have the following subcases.

\emph{Subcase 1.2.1.} If $k\not =l$, then we denote a common neighbor of $a$ and $b$ in $C$
by $c=(u^c,v^c) \in C$, where $v^c=v^a=v_k$, and we denote by $z=(u^z,v^z)\in C$, where $v^z=v^x=v_l$, a common neighbor of
$x$ and $y$ in $C$. We can construct $D_m$ such that $v^c$ and $v^z$ belong to $D_m$. We choose $D_n^c$ and $D_n^z$ satisfying that $u^c\in D_n^c$ and $u^z\in D_n^z$. Thus, $D=(D'_n\times D_m)-(D'_n\times \{v^c\})\cup (D_n^c\times \{v^c\})-
(D'_n\times \{v^z\})\cup (D_n^z\times \{v^z\})$.

\emph{Subcase 1.2.2.} If $k=l$, then we choose $D_m$ such that $v_{k-2}\in D_m$. Hence $D=D'_n\times D_m$.

\emph{Case 2.} If $b\in C$ and $y\in V-C$, then we denote a common neighbor of $a$ and $b$ by $c=(u^c,v^c)\in C$ and by
$z=(u^z,v^z)\in C$ a common neighbor of $x$ and $y$, where $v^x=v^z=v_l$. So, we have the following subcases.

\emph{Subcase 2.1.} If $l\equiv 2\;(mod\; 3)$, then we construct
$D_m$ such that $v^c,v_{l+2}\in D_m$ and $D_n$ that $u^c\in D_n$ (by Observation \ref{obs1}). Finally,
$D=D_n\times D_m$.

\emph{Subcase 2.2.} If $l\equiv 1\;(mod\; 3)$, then we construct
$D_m$ such that $v^c,v^z\in D_m$. We choose $D_n^c$ and $D_n^z$ such that $u^c\in D_n^c$ and $u^z\in D_n^z$. Thus $D=(D'_n\times D_m)-(D'_n\times \{v^z\})\cup (D_n^z\times \{v^z\})-(D'_n\times \{v^c\})\cup (D_n^c\times \{v^c\})$.

\emph{Case 3.} If $b\in V-C$ and $y\in C$, then by symmetry it is similar to \emph{Case 2.}

\emph{Case 4.} If $b,y\in C$, then the vertex $v^a$ either lies on a path $P_m$ between $v_{3p}$ and $v_{3(p+1)}$
for some integer $p$, $1\leq p\leq r$ or $v^a\in \{v_1,v_2,v_{m-1},v_m\}$. In the first case we can choose $D_m$
such that $v_{3p-1}$, $v_k$ and $v_{3(p+1)+1}$ belong to $D_m$, otherwise $v_k, v_3\in D_m$ or $v_k, v_{m-3}\in D_m$. Let $k'$ be such that $k\not =k'$
and $3p<k'<3(p+1)$. So, we consider the next subcases.

\emph{Subcase 4.1.} If $v^av^x\not \in E(P_m)$ and $v^a\not =v^x$, then we denote by $c\in C$ a~common neighbor of $a$ and $b$, and also
$x$ and $y$ have common neighbor $z\in C$. Similarly like in \emph {Subcase 1.2.1} we construct $D$ containing $c$ and $z$.

\emph{Subcase 4.2.} If $v^a=v^x$ or $v^av^x\in E(P_m)$, then we consider the following cases.

\emph{Subcase 4.2.1.} If $e_1$ and $e_2$ are adjacent, then we denote by $w=(u^w,v^w)\in C$ a common neighbor $a$, $b$, $x$ and $y$  and we construct $D$ such that $w\in D$ in the following way: we choose $D_n^w$ and $D_m^w$ such that $u^w\in D_n^w$ and $v^w\in D_m^w$. Then we take $D=D_n^w\times D_m^w$.

\emph{Subcase 4.2.2.} If $e_1$ and $e_2$ are not adjacent, then let $A=\{u^a,u^b,u^x,u^y\}\subseteq V(P_n)$. We consider the following cases:

\emph{Subcase 4.2.2.1.} If $u^a=u^b=u_i$ and $u^x=u^y=u_j$, then we consider the following items,
\begin{itemize}
\item if $i=1$ and $j=2$ ($i=n-1$, $j=n$), then we choose $D_n$ such that $u_2,u_3\in D_n$ ($u_{n-2},u_{n-1}\in D_n$) and $D=D_n\times D_m$.
\item if $i=1$ and $j\geq 3$ ($j=n$, $i\leq n-2$), then we choose $D_n$ such that $u_2,u_{j-1}\in D_n$ ($u_{i+1},u_{n-1}\in D_n)$ and $D=D_n\times D_m$.
\item if $i>1$ and $j<n$, then there exists $D_n$ such that $|\{u_{i-1},u_{i+1}\}\cap D_n|=1$ and $|\{u_{j-1},u_{j+1}\}\cap D_n|=1$ (in particular $v_{i+1}\in D_n$ for $j=i+2$) and $D=D_n\times D_m$.
\end{itemize}

\emph{Subcase 4.2.2.2} If $u^a=u^b=u_i$ and $u^x\not =u^y$, then let us say $u^x=u_j$ and $u^y=u_{j+1}$. Now, if $a$, $b$, $x$ and $y$ have a common neighbor $w\in C$ we construct $D$ such that $w\in C$ similarly as in \emph{Subcase 4.2.1}. Else, we choose such a set $D_n$ containing a neighbor of $u^a$.
So, we construct $D'=D_n\times D_m$ and; if $x\in D'$ (or $y\in D'$), then we exchange it with $(u^x,v_{k'})$ (for $y$ with $(u^y,v_{k'})$). After these modifications we obtain $D$ from $D'$.

\emph{Subcase 4.2.2.3} If $u^a\not =u^b$ and $u^x=u^y$, then it is similar to \emph{Subcase 4.2.2.2}.

\emph{Subcase 4.2.2.4} If $u^a\not =u^b$ and $u^x\not =u^y$, then we consider three subcases:

\begin{itemize}
\item $|A|=2$ and $A=\{u_i,u_{i+1}\}$. If $u_1\in A$ ($u_n\in A$) we can construct $D_n$ such that $u_1,u_3\in D_n$ ($u_{n-2},u_n\in D_n$). Else, $u_{i-1},u_{i+2}\in D_n$. Thus, we take $D=D_n\times D_m$.

\item $|A|=3$ and $A=\{u_i,u_{i+1},u_{i+2}\}$. So, we choose $D_n$ such that $u_i,u_{i+3}\in D_n$ for $i< n-3$ and $D_n=D'_n$ for $i=n-3$. We construct $D'=D_n\times D_m$ and; if $x\in D'$, then we exchange it with $(u^x,v_{k'})$. We do the same for $a$, $b$ and $y$. After these modifications we obtain $D$ from $D'$.

\item If $|A|=4$, then we denote vertices $x$ and $y$ such that $u^x=u_j$ and $u^y=u_{j+1}$. Then we choose $D_n$ which contains $u^a$. We construct $D'=D_n\times D_m$ and; if $a\in D'$, then we exchange it with $(u^a,v_{k'})$. We do the same for $x$ and $y$. After these modifications we obtain $D$ from $D'$.
\end{itemize}
$\,$
\end{proof}

\begin{obs}\label{bond-number-dom-set}
Let $G$ be a graph. If there are $t$ disjoint dominating sets of
minimum cardinality in $G$, then $b(G)\ge
\left\lceil\frac{t}{2}\right\rceil$.
\end{obs}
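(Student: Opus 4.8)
The plan is to argue the form of the bondage bound directly: I will show that deleting \emph{any} set $E'$ of edges with $|E'|<\lceil t/2\rceil$ leaves the domination number unchanged, which is exactly what $b(G)\ge\lceil t/2\rceil$ asserts. Let $D_1,\dots,D_t$ be the given pairwise disjoint dominating sets, each of cardinality $\gamma(G)$. Since removing edges can never decrease the domination number, it suffices to exhibit, for every such $E'$, at least one index $i$ for which $D_i$ is still a dominating set of $G-E'$; that $D_i$ is then a dominating set of size $\gamma(G)$ in $G-E'$, forcing $\gamma(G-E')=\gamma(G)$.

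The heart of the argument is a counting claim: a single deleted edge can spoil at most two of the sets $D_1,\dots,D_t$. First I would pin down when a set $D_i$ ceases to dominate after deleting $E'$: this happens exactly when some vertex $w\notin D_i$ loses all of its neighbours lying in $D_i$. The key observation is that deleting an edge $e=uv$ can affect the domination of such a $w$ only if $w\in\{u,v\}$ and its partner belongs to $D_i$; in particular, if neither endpoint of $e$ lies in $D_i$, then $e$ cannot disturb $D_i$ at all. Hence, whenever $E'$ destroys $D_i$, I can charge to $D_i$ some deleted edge $e_i\in E'$ that is incident with a witness vertex and whose other endpoint lies in $D_i$.

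Next I would bound how many sets a fixed deleted edge $e=uv$ can be charged by. If $e$ is charged by $D_i$, then one of $u,v$ belongs to $D_i$; because $D_1,\dots,D_t$ are pairwise disjoint, $u$ lies in at most one of them and $v$ lies in at most one of them. Consequently $e$ is charged by at most two distinct sets, and summing over $E'$ shows that the total number of destroyed sets is at most $2|E'|$.

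Finally I would combine the bounds. Since $|E'|<\lceil t/2\rceil$, a short check in the even and odd cases gives $2|E'|<t$, so strictly fewer than $t$ of the sets are destroyed and at least one $D_i$ survives in $G-E'$. This yields $\gamma(G-E')=\gamma(G)$ for every $E'$ with $|E'|<\lceil t/2\rceil$, and therefore $b(G)\ge\lceil t/2\rceil$. I expect the only delicate point to be the ``at most two'' charging step: one must verify carefully that an edge with neither endpoint in $D_i$ genuinely cannot break $D_i$, and that it is precisely the disjointness of the $D_i$ that prevents a single edge from being blamed for three or more failures. The arithmetic translating $2|E'|<t$ into the ceiling bound is routine.
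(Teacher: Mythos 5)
Your proof is correct. The paper states this result as an Observation with no proof at all, so there is nothing to compare against; your charging argument --- a deleted edge $uv$ can only break a set $D_i$ if one of $u,v$ lies in $D_i$, and by pairwise disjointness the two endpoints lie in at most two of the sets, so each deletion ruins at most two of the $t$ dominating sets while $2|E'|\le 2\lceil t/2\rceil-2<t$ --- is exactly the standard justification the authors presumably had in mind, and all the delicate points (only endpoints of a deleted edge can lose domination, vertices of $D_i$ always dominate themselves) are handled correctly.
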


\begin{thm}
If $n=3t+2$ and $m=3r+2$, then $b(P_{n}\boxtimes P_m)=2$.
\end{thm}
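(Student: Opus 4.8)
The plan is to establish the two bounds $b(P_n\boxtimes P_m)\ge 2$ and $b(P_n\boxtimes P_m)\le 2$ separately, writing $n=3t+2$, $m=3r+2$ and recalling from Lemma \ref{dom-number-strong-2} that $\gamma(P_n\boxtimes P_m)=(t+1)(r+1)$. Throughout I denote the vertices of $P_n$ by $u_1,\dots,u_n$ and those of $P_m$ by $v_1,\dots,v_m$.

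For the lower bound I would invoke Observation \ref{bond-number-dom-set}. By Observation \ref{obs-paths}(iii) the path $P_n$ has two disjoint $\gamma$-sets, $S_1=\{u_2,u_5,\dots,u_{3t+2}\}$ and $S_2=\{u_1,u_4,\dots,u_{3t+1}\}$ (the positions $\equiv 2$ and $\equiv 1\pmod 3$), and likewise $P_m$ has two disjoint $\gamma$-sets $T_1,T_2$. Since a product of dominating sets is dominating in the strong product and $|S_i\times T_j|=(t+1)(r+1)=\gamma(P_n\boxtimes P_m)$, each $S_i\times T_j$ is a $\gamma$-set of $P_n\boxtimes P_m$. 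Two product sets $S_i\times T_j$ and $S_{i'}\times T_{j'}$ are disjoint whenever $S_i\cap S_{i'}=\emptyset$ or $T_j\cap T_{j'}=\emptyset$; checking the pairs shows that the four sets $S_1\times T_1$, $S_1\times T_2$, $S_2\times T_1$, $S_2\times T_2$ are pairwise disjoint. Hence there are four pairwise disjoint $\gamma$-sets, and Observation \ref{bond-number-dom-set} gives $b(P_n\boxtimes P_m)\ge\lceil 4/2\rceil=2$.

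For the upper bound I would remove the two ``diagonal'' edges of the corner clique $\{u_1,u_2\}\times\{v_1,v_2\}$, namely $e_1=(u_1,v_1)(u_2,v_2)$ and $e_2=(u_1,v_2)(u_2,v_1)$, and show $\gamma(G')>\gamma(P_n\boxtimes P_m)$ for $G'=P_n\boxtimes P_m-\{e_1,e_2\}$. The device is the partition of the vertex set into the $(t+1)(r+1)$ closed neighbourhoods of the packing $S_1\times T_1$: these blocks are the $3\times3$ squares (smaller at the border) centred at $(u_{3a+2},v_{3b+2})$, they partition $V$, and for each centre $c$ one has $N_{G'}[c]\subseteq$ (its block). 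If $D'$ were a dominating set of $G'$ with $|D'|=(t+1)(r+1)$, then dominating each centre forces at least one vertex of $D'$ in every block, and a counting argument then forces exactly one vertex of $D'$ per block.

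The crux, and the step I expect to be the main obstacle, is to prove that the corner block $\mathcal{B}=\{u_1,u_2,u_3\}\times\{v_1,v_2,v_3\}$ actually requires two vertices of $D'$, contradicting ``exactly one''. Here I would verify two facts about $G'$: first, that every $G'$-dominator of each of the four corner vertices $(u_1,v_1),(u_1,v_2),(u_2,v_1),(u_2,v_2)$ lies inside $\mathcal{B}$ (deleting $e_1,e_2$ only removes neighbours that were themselves in $\mathcal{B}$); and second, that after the deletion these four vertices induce a $4$-cycle, so no single vertex of $\mathcal{B}$ can dominate all four of them, since a vertex of the cycle covers only three of the four while any other vertex of $\mathcal{B}$ covers at most two. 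Consequently dominating the corner forces $|D'\cap\mathcal{B}|\ge 2$, the required contradiction, whence $\gamma(G')\ge(t+1)(r+1)+1$ and $b(P_n\boxtimes P_m)\le 2$. Combining the two bounds yields the statement; I would also remark that the argument specialises correctly to the base case $P_2\boxtimes P_2=K_4$, where the two removed edges form a perfect matching and turn $K_4$ into $C_4$.
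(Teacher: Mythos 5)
Your proposal is correct, and it shares the paper's two key choices: the lower bound is exactly the paper's argument (Observation \ref{obs-paths}(iii) gives two disjoint $\gamma$-sets in each factor, their four pairwise disjoint products are $\gamma$-sets of $P_n\boxtimes P_m$, and Observation \ref{bond-number-dom-set} yields $b\ge 2$), and for the upper bound you delete the same two edges, the diagonals of a corner $2\times 2$ clique. Where you genuinely diverge is in certifying that this deletion raises the domination number. The paper first checks that a particular $\gamma$-set fails, then rules out any replacement $S'$ by decomposing the graph into $P_{n-2}\boxtimes P_{m-2}$ and an L-shaped graph $H$ (a $P_n\boxtimes P_2$ glued to a $P_2\boxtimes P_m$ along the corner square), asserting $\gamma(P_n\boxtimes P_m-\{e,f\})=\gamma(P_{n-2}\boxtimes P_{m-2})+\gamma(H-\{e,f\})=tr+t+r+2$; the additivity of $\gamma$ over this decomposition and the value $\gamma(H-\{e,f\})=t+r+2$ are stated rather than derived. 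You instead use the packing of centres $(u_{3a+2},v_{3b+2})$, whose closed neighbourhoods partition the vertex set into $(t+1)(r+1)$ blocks, force any putative $\gamma$-size dominating set of the modified graph to meet each block exactly once, and then show the corner block needs two vertices because its four corner vertices, all of whose dominators lie inside that block, induce a $C_4$ after the deletion and no single vertex of the block covers all four. Your route buys a self-contained, fully verifiable contradiction that treats all candidate dominating sets at once and avoids the unproved additivity step; the paper's route is shorter to state once one grants the decomposition of $\gamma$ over $H$ and $P_{n-2}\boxtimes P_{m-2}$.
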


\begin{proof}
Since $n=3t+2$ and $m=3r+2$, by Observation \ref{obs-paths} (iii)
there are two disjoint dominating sets of minimum cardinality in
each path $P_n$ and $P_m$. Thus, there are four disjoint dominating
sets of minimum cardinality in $P_{n}\boxtimes P_m$. Hence, by
Observation \ref{bond-number-dom-set} we have that $b(P_{n}\boxtimes
P_m)\ge 2$.

On the other hand, since $n=3t+2$ and $m=3r+2$, by Lemma \ref{dom-number-strong-2} we have that $\gamma(P_{n}\boxtimes
P_m)=(t+1)(r+1)$. Hence, any dominating set $S$ of minimum
cardinality in $P_{n}\boxtimes P_m$ leads to a vertex partition
$\Pi=\{A_1,A_2,...,A_{(t+1)(r+1)}\}$ of the graph $P_{n}\boxtimes
P_m$ with $|A_i\cap S|=1$, for every $i\in \{1,...,(t+1)(r+1)\}$.
Moreover, there exist two vertices $u_i,u_{i+1}$ in $P_n$, two
vertices $v_j,v_{j+1}$ in $P_m$ (See Figure \ref{fig-1}) and a set
$A_l\in \Pi$ such that
$A_l=\{(u_i,v_j),(u_i,v_{j+1}),(u_{i+1},v_j),(u_{i+1},v_{j+1})\}$,
only one of the vertices of the set $A_l$ belongs to $S$ and such a
vertex also dominates the rest of vertices in $A_l$, which are not
dominated by any other vertex in $S$. Thus, by deleting the edges
$e=(u_i,v_j)(u_{i+1},v_{j+1})$ and $f=(u_{i+1},v_j)(u_{i},v_{j+1})$,
we have that the set $S$ is not a dominating set of $P_{n}\boxtimes
P_m-\{e,f\}$.

\begin{figure}[h]
\begin{center}
\includegraphics[width=0.7\textwidth]{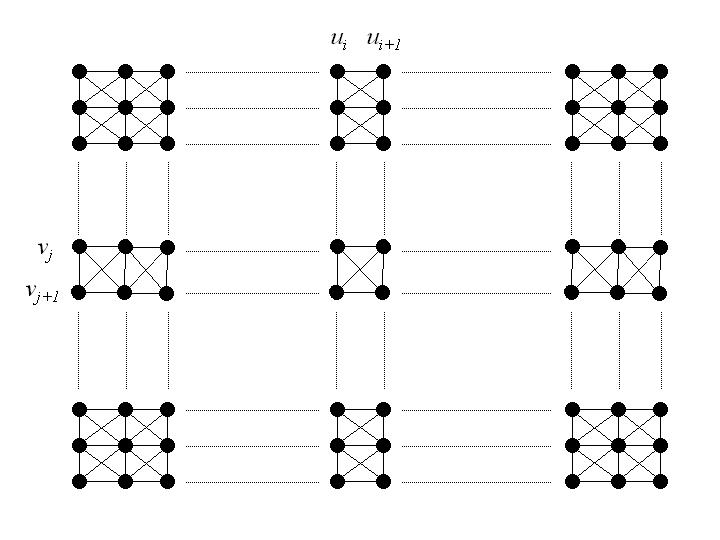}
\caption{The vertices $\{(u_i,v_j),
(u_i,v_{j+1}),(u_{i+1},v_j),(u_{i+1},v_{j+1})\}$.}\label{fig-1}
\end{center}
\end{figure}

Let us suppose there exists a set $S'$ with $|S'|=|S|$, such that
$S'$ is a dominating set in $P_{n}\boxtimes P_m-\{e,f\}$. Let
$\{x_1,x_2\}$ be the set of vertices of the path $P_2$ and let $H$
be the graph obtained from the graphs $P_n\boxtimes P_2$ and
$P_2\boxtimes P_m$, by identifying the vertices $(u_i,x_1)$,
$(u_i,x_2)$, $(u_{i+1},x_1)$ and $(u_{i+1},x_2)$ of $P_n\boxtimes
P_2$ with the vertices $(x_1,v_j)$, $(x_1,v_{j+1})$, $(x_2,v_j)$ and
$(x_2,v_{j+1})$ of $P_2\boxtimes P_m$, respectively (See Figure
\ref{fig-2}). Notice that $\gamma(H)=t+r+1$.

\begin{figure}[h]
\begin{center}
\includegraphics[width=0.7\textwidth]{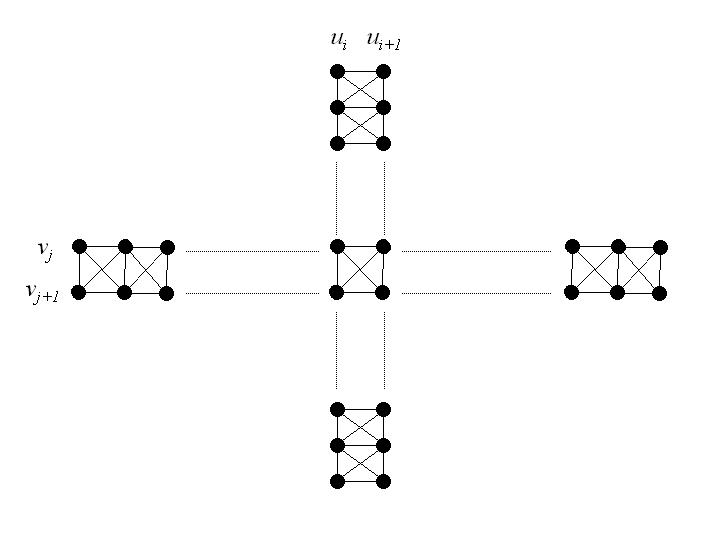}
\caption{The graph $H$.}\label{fig-2}
\end{center}
\end{figure}

Since $n=3t+2$ and $m=3r+2$, we have $$\gamma(P_{n}\boxtimes
P_m)=\gamma(P_{n-2}\boxtimes P_{m-2})+\gamma(H)=tr+t+r+1.$$ Hence, as $\gamma(H-\{e,f\})=t+r+2$ we
obtain that
\begin{align*}
\gamma(P_{n}\boxtimes P_m-\{e,f\})&= \gamma(P_{n-2}\boxtimes
P_{m-2})+\gamma(H-\{e,f\}) \\&=tr+t+r+2\\&>tr+t+r+1\\&=\gamma(P_{n}\boxtimes
P_m),
\end{align*}
which is a contradiction. Hence, there is no such a dominating set
$S'$ with $|S|=|S'|$ such that $S'$ dominates $P_{n}\boxtimes
P_m-\{e,f\}$. Therefore, the result follows.
\end{proof}

Finally, for the case $n=3t+1$ and $m=3r+1$, by Observation \ref{bond-number-dom-set} and Theorem \ref{bond-strong-cotas} we obtain the following bounds for the bondage number of  $P_n\boxtimes P_m$.

\begin{thm}
If $n=3t+1$ and $m=3r+1$, then $$2\le b(P_{n}\boxtimes
P_m)\le 5.$$
\end{thm}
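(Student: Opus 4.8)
The upper bound $b(P_n\boxtimes P_m)\le 5$ is already supplied by Theorem~\ref{bond-strong-cotas}, so no new work is needed there; I would simply invoke it. The substance of the statement lies entirely in the lower bound $b(P_n\boxtimes P_m)\ge 2$, that is, in showing that deleting any single edge $e$ from $P_n\boxtimes P_m$ does not increase the domination number.

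\textbf{Approach to the lower bound.} First I would count disjoint minimum dominating sets, since Observation~\ref{bond-number-dom-set} converts such a count into a lower bound on $b$. For $P_{3t+1}$, however, Observation~\ref{obs-paths}~(ii) asserts there is only \emph{one} minimum dominating set satisfying property $\mathcal{P}$, so the naive product construction will not immediately yield two \emph{disjoint} $\gamma$-sets in $P_n\boxtimes P_m$, and Observation~\ref{bond-number-dom-set} alone is not enough to force $b\ge 2$. For this reason I expect a direct edge-removal argument to be the cleaner route. So the plan is: fix an arbitrary edge $e=(u^a,v^a)(u^b,v^b)$ of $P_n\boxtimes P_m$, and exhibit a minimum dominating set $D$ that survives the deletion of $e$, i.e.\ a $\gamma$-set of $P_n\boxtimes P_m-\{e\}$ of the same size $(t+1)(r+1)$.

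\textbf{Key steps, in order.} I would begin by recalling from Lemma~\ref{dom-number-strong-2} that $\gamma(P_n\boxtimes P_m)=(t+1)(r+1)$, and by noting, as in the proof of Theorem~\ref{p1}, that it suffices to treat edges having at least one endpoint in the ``column-good'' set $C$ (vertices whose $P_m$-coordinate is $\not\equiv 0\pmod 3$), since every other vertex is, in every $\gamma$-set, dominated from such a column. Then, following the strategy of Theorem~\ref{p1} but requiring robustness against only a single deletion rather than two, I would split into cases according to how the two endpoints of $e$ sit relative to $C$ and relative to a chosen product $\gamma$-set $D'_n\times D_m$, where $D'_n=\{u_1,u_4,\ldots,u_{n-3},u_n\}$ is a fixed $\gamma$-set of $P_{3t+1}$ and $D_m$ is chosen by Observation~\ref{obs1} to place a dominating vertex adjacent to $e$'s column. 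In each case the idea is the same as in Subcases~1.1 and 4.2.1 of Theorem~\ref{p1}: if $e$ joins a vertex of $C$ to its unique ``dominator,'' I shift that dominator to a common neighbor $c\in C$ (choosing $D_n,D_m$ so that $c\in D_n\times D_m$), so that the shifted set still covers everything without using the edge $e$; otherwise $e$ is irrelevant to some product $\gamma$-set and $D=D_n\times D_m$ works unchanged.

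\textbf{The main obstacle.} The hard part will be the single boundary-column case: when $e$ lies in an extremal column ($v^a\in\{v_1,v_2,v_{m-1},v_m\}$ paired with $u^a\in\{u_1,u_n\}$) where the path's ends force the dominating vertex into a corner with no free common neighbor in $C$ to absorb the loss of $e$. Here I would argue as in Observation~\ref{obs1} that even at a boundary column of $P_{3r+2}$ one of two consecutive vertices can carry the domination, giving enough slack to reroute; the bound $b\ge 2$ will follow once every such case is shown to admit a surviving $\gamma$-set. Combining this lower bound with Theorem~\ref{bond-strong-cotas} yields $2\le b(P_n\boxtimes P_m)\le 5$, completing the proof.
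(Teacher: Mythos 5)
Your upper bound is handled exactly as the paper handles it, by citing Theorem \ref{bond-strong-cotas}; the issue is the lower bound, where you talk yourself out of the paper's actual argument for a reason that does not hold. Observation \ref{bond-number-dom-set} does not require the disjoint $\gamma$-sets to satisfy property $\mathcal{P}$; it only requires them to be pairwise disjoint and of minimum cardinality. Although Observation \ref{obs-paths}(ii) says $P_{3t+1}$ has a unique $\gamma$-set satisfying $\mathcal{P}$, the path $P_{3t+1}$ does have two \emph{disjoint} $\gamma$-sets, for instance $\{u_1,u_4,\dots,u_{3t-2},u_{3t}\}$ and $\{u_2,u_5,\dots,u_{3t-1},u_{3t+1}\}$, each of cardinality $t+1=\gamma(P_{3t+1})$. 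Taking the four products of one disjoint pair in each factor, and using $\gamma(P_n\boxtimes P_m)=\gamma(P_n)\gamma(P_m)$ from Lemma \ref{dom-number-strong-2}, one obtains four pairwise disjoint $\gamma$-sets of $P_n\boxtimes P_m$, and Observation \ref{bond-number-dom-set} immediately yields $b(P_n\boxtimes P_m)\ge\left\lceil 4/2\right\rceil=2$. That is precisely the paper's (essentially one-line) proof.

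Having rejected that route, you substitute a full single-edge-deletion case analysis in the style of Theorem \ref{p1}, but you do not carry it out: the cases are only named, the ``main obstacle'' (boundary columns) is flagged rather than resolved, and the tools you invoke are calibrated for the wrong residue class --- Observation \ref{obs1} and the set $C$ of columns $\not\equiv 0\pmod 3$ are tailored to $P_{3r+2}$, whereas here $m=3r+1$ and the structure of the $\gamma$-sets of $P_m$ is different (you even write $P_{3r+2}$ in your discussion of the obstacle). So, as written, the proposal leaves a genuine gap in the only nontrivial half of the statement. The fix is simply to exhibit the two disjoint $\gamma$-sets of $P_{3t+1}$ above and apply Observation \ref{bond-number-dom-set} as the paper intends.
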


Nevertheless we strongly think that in this case $b(P_n\boxtimes P_m)=5$.

\section{Bondage number of $P_n\times P_m$}

Let $G$ and $H$ be graphs with the sets of vertices $V_1=\{v_1,v_2,\ldots ,v_n\}$ and $V_2=\{u_1,u_2,\ldots ,u_m\}$, respectively. The direct product of $G$ and $H$ is the graph $G\times H$ formed by
the vertices $V=\{(v_i,u_j)\;:\;1\le i\le n,\,1\le j\le m\}$ and two
vertices $(v_i,u_j)$ and $(v_k,u_l)$ are adjacent in $G\times H$ if
and only if  $v_i\sim v_k$ and $u_j\sim u_l$. In this section we will study the bondage number of the direct product of two paths of order at least two.

Notice that any direct product of two paths contains at least two
vertices at distance two such that one of them has degree one and the other one has degree two. So,
Lemma \ref{lem1} leads to $b(P_n\times P_m)\le 2$.

\begin{thm}
For any paths $P_n$ and $P_m$,
\begin{itemize}
\item[{\rm (i)}] If $n\le 4$ or $m\le 4$, then $b(P_n\times P_m)=1$.
\item[{\rm (ii)}] If $n>4$ and $m> 4$, then $b(P_n\times P_m)\le 2$.
\end{itemize}
\end{thm}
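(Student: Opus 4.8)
The plan is to dispose of part~(ii) immediately and to concentrate all the effort on part~(i). Part~(ii) is nothing more than the observation recorded just before the statement: for $n,m>4$ the corner $(v_1,u_1)$ has degree one, its only neighbour is $(v_2,u_2)$, while $(v_1,u_3)$ has degree two and lies at distance two from $(v_1,u_1)$; hence Lemma~\ref{lem1} yields $b(P_n\times P_m)\le 1+2-1=2$. So everything of substance is in part~(i), where we must \emph{produce a single edge} whose removal raises the domination number (the lower bound $b\ge 1$ being automatic). By commutativity of the direct product we may assume $m\le 4$.

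My first step is a pendant-corner reduction. Put $w=(v_1,u_1)$; it has degree one with unique neighbour $(v_2,u_2)$, and I would delete the incident edge $e$. After this deletion $w$ is isolated, so
\[
\gamma\big(P_n\times P_m-e\big)=1+\gamma\big((P_n\times P_m)-w\big),
\]
where $(P_n\times P_m)-w$ denotes the vertex-deleted graph. Because $\gamma(G-w)\ge\gamma(G)-1$ for every vertex, the inequality $\gamma(P_n\times P_m-e)>\gamma(P_n\times P_m)$ is \emph{equivalent} to
\[
\gamma\big((P_n\times P_m)-w\big)\ge\gamma(P_n\times P_m),
\]
that is, to the assertion that deleting the corner does not lower the domination number. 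The whole of part~(i) thus reduces to checking this one inequality for $m\in\{2,3,4\}$.

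The second step is to verify the inequality on these thin strips. Since both factors are bipartite, $P_n\times P_m$ has exactly two connected components, distinguished by the parity of $i+j$, and $w$ lies in one of them; it suffices to argue inside that component. For $m=3$ and $m=4$ with $n\ge 3$ I would describe the component of $w$ explicitly --- a short computation shows it is a caterpillar-like strip carrying several $4$-cycles, with $w$ a pendant corner --- and use the presence of these cycles to show that the dominator covering $w$ cannot be reallocated to save a vertex elsewhere, so that the component's domination number is unchanged when $w$ is removed. Note that Lemma~\ref{dom-number-strong-2} does \emph{not} apply to the direct product, so these domination numbers must be pinned down by hand.

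The genuine obstacle is the degenerate case in which a component collapses to a path, which happens precisely when one factor is $P_2$: then $P_2\times P_k\cong P_k\sqcup P_k$, the component of $w$ is a path $P_k$ with $w$ an endpoint, and deleting $w$ produces $P_{k-1}$. Here $\gamma(P_{k-1})\ge\gamma(P_k)=\lceil k/3\rceil$ holds exactly when $k\not\equiv 1\pmod 3$, so the pendant-corner argument succeeds in that range but \emph{fails} when $k\equiv 1\pmod 3$; and in the latter case no single edge can work at all, since the component is then a path with $b(P_k)=2$. This parity phenomenon in the $P_2$-strip is therefore the crux, and I would expect part~(i) to require a corresponding exception --- or an extra hypothesis ruling out $\{n,m\}=\{2,k\}$ with $k\equiv 1\pmod 3$ --- before the clean value $b=1$ can be claimed throughout.
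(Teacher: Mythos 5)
Your closing suspicion is the decisive point: part (i) of the theorem is false as stated, and your $P_2$-strip analysis locates the counterexamples exactly. Since $P_2$ has a single edge, $P_2\times P_m$ is the disjoint union of two copies of $P_m$ (the two parity classes of $i+j$), so $b(P_2\times P_m)=b(P_m)$, which equals $2$ whenever $m\equiv 1\pmod 3$ and $m\ge 4$. Concretely, $P_2\times P_4\cong P_4\sqcup P_4$ has domination number $4$, and deleting any single edge turns one component into $P_1\sqcup P_3$ or $P_2\sqcup P_2$, whose domination number is still $2$; hence $b(P_2\times P_4)=2$ even though $(n,m)=(2,4)$ falls under part (i). The paper's own proof breaks at precisely the spot you probe: it claims that for $n\le 3$ or $m\le 3$ there are two degree-one vertices at distance two and invokes Lemma \ref{lem1}. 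That is true when one factor is $P_3$ (the corners $(v_1,u_1)$ and $(v_1,u_3)$ both have degree one and share the neighbour $(v_2,u_2)$), but false for $n=2$, $m\ge 4$, where the only degree-one vertices of a component are the two ends of a path $P_m$, at distance $m-1\ge 3$. So the statement genuinely requires the exception you propose: $\{n,m\}=\{2,k\}$ with $k\equiv 1\pmod 3$, $k\ge 4$, in which case the bondage number is $2$.

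On the rest of your proposal: your part (ii) is the paper's argument verbatim. Your pendant-corner reduction for (i) --- delete the unique edge $e$ at $w=(v_1,u_1)$, use $\gamma(G-e)=1+\gamma(G-w)$ together with $\gamma(G-w)\ge\gamma(G)-1$ to reduce the claim to $\gamma(G-w)\ge\gamma(G)$ --- is correct and is more systematic than what the paper offers, which checks $n=m=4$ by hand and never addresses the case $\min(n,m)=4<\max(n,m)$ at all. Two caveats remain on your side. First, the verification of $\gamma(G-w)\ge\gamma(G)$ for the strips with $m\in\{3,4\}$ is only described, not carried out; since the $m=3$ case (and $m=4$, $n=3$) already follows from Lemma \ref{lem1} via the two degree-one corners, the computation you actually owe is the single family $m=4$, $n\ge 4$. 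Second, when you do supply it, the argument must be run in each connected component separately (as you note, $\gamma$ is additive over the two parity components), so it is worth stating explicitly that increasing the domination number of the component containing $w$ suffices.
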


\begin{proof}
(i) If $n\le 3$ or $m\le 3$, then there exist two vertices in $P_n\times
P_m$ at distance two such that they have degree equal to one. Thus,
by Lemma \ref{lem1} we obtain that $b(P_n\times P_m)=1$. If $n=m=4$, then $\gamma(P_4\times P_4)=4$ and
it is easy to verify that removing of any pendant edge leads to a graph $G'$ with $\gamma(G')=5$, what implies $b(P_4\times P_4)=1$.

(ii) On the contrary, if $n>4$ and $m> 4$, then there are two vertices in $P_n\times
P_m$ at distance two such that one of them has degree one and the other one has degree two. Thus, by Lemma \ref{lem1} we obtain that $b(P_n\times P_m)\le 2$.
\end{proof}

\begin{figure}[h]
\begin{center}
\includegraphics[width=0.4\textwidth]{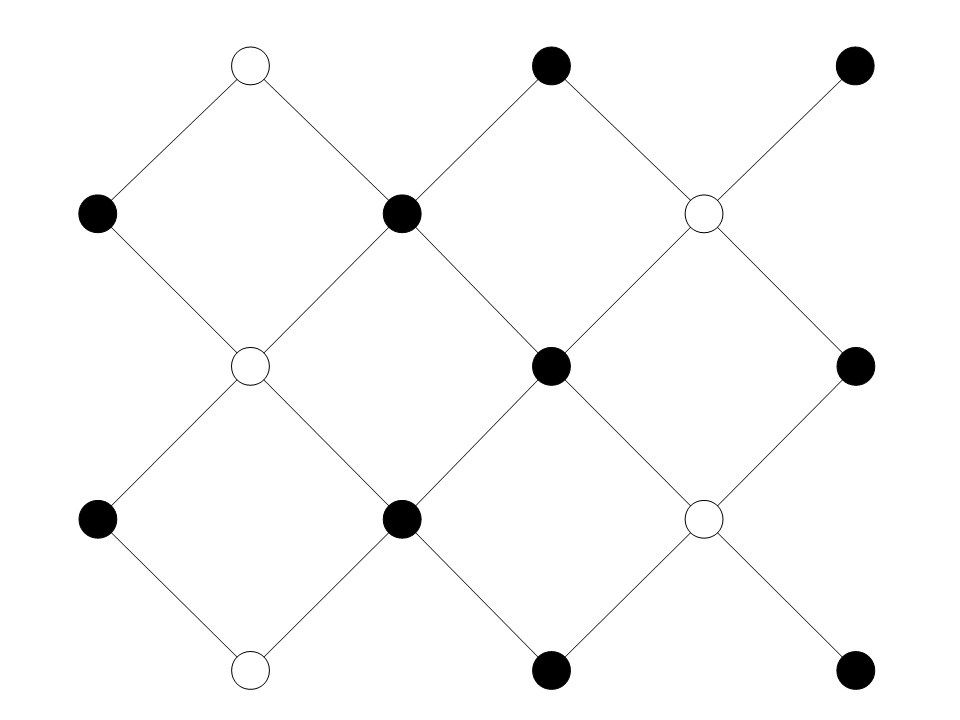}\hspace*{0.5cm}
\includegraphics[width=0.4\textwidth]{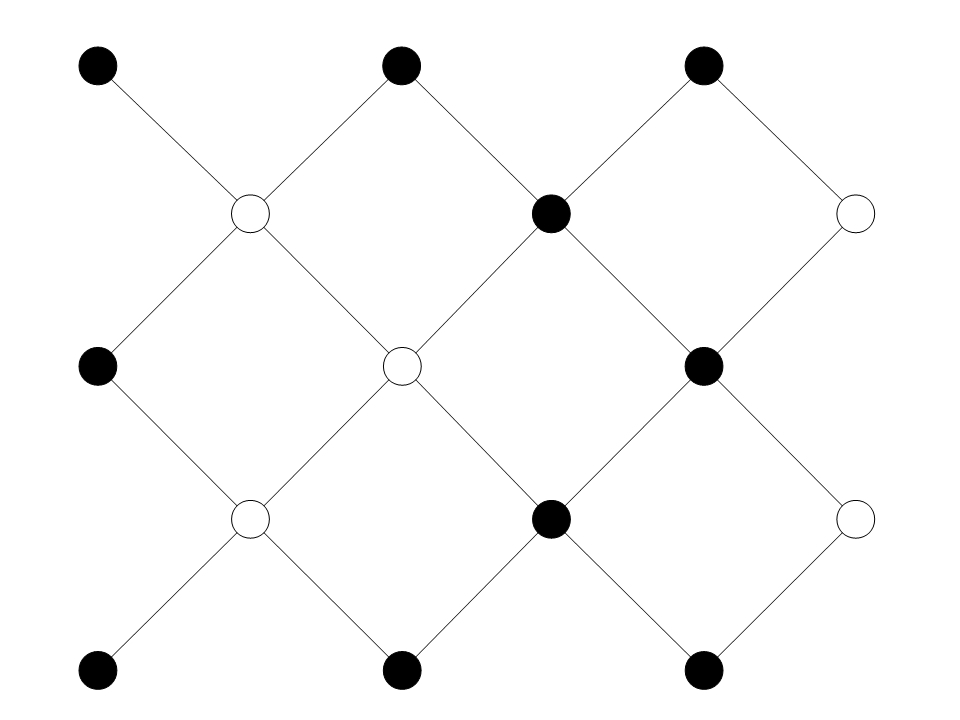}
\caption{The components $C_1$ and $C_2$ of $P_6\times P_5$.}\label{fig-3}
\end{center}
\end{figure}

Notice that there are values of $n,m\ge 4$ such that $b(P_n\times P_m)=2$. The graph $P_6\times P_5$ is an example, which has two isomorphic connected components $C_1$ and $C_2$ (See Figure \ref{fig-3}, where the vertices in white represents dominating sets of minimum cardinality in each component) having domination number equal to five. Thus, $\gamma(P_6\times P_5)=10$. Notice that by deleting any edge $e$ from $C_1$ or $C_2$ we can obtain a dominating set of cardinality five in $C_1-e$ or $C_2-e$. Therefore, we have that $b(P_6\times P_5)=2$.

\end{document}